\documentclass{amsart}
\usepackage{vmargin,amsmath,amsfonts,amssymb,amsthm,xcolor,amscd,latexsym,xspace,enumerate}
\usepackage[pagebackref, colorlinks=true,linkcolor=blue,citecolor=red]{hyperref}
\usepackage{hyperref}
\usepackage{mathrsfs} 
\usepackage{pdfpages}
\usepackage{tikz-cd}
\usetikzlibrary{arrows}
\usetikzlibrary{intersections}
\usepackage{pgfplots}
\usetikzlibrary{calc,3d,shapes, pgfplots.external, intersections}
\usepackage{pgfplots}
\usepackage{comment}

\usepackage[mathcal]{eucal}
\usepackage[all]{xy}

\usepackage{graphics,colortbl}

\input xy
\xyoption{all}
\usepackage{epsfig}

\newtheorem{theorem}{Theorem}[section]
\newtheorem{corollary}[theorem]{Corollary}
\newtheorem{lemma}[theorem]{Lemma}
\newtheorem{proposition}[theorem]{Proposition}
\theoremstyle{definition}
\newtheorem{definition}[theorem]{Definition}
\newtheorem{remark}[theorem]{Remark}

\newtheorem{problem}[theorem]{Problem}
\newtheorem{example}[theorem]{Example}
\numberwithin{equation}{section}
\usepackage{stmaryrd}
\usepackage{mathtools}
\usepackage{xcolor, soul}
\sethlcolor{green}

\pgfplotsset{compat=1.18} 
\title[A Dynamical Approach to Schur's Theorem]{A Dynamical Approach to Schur's Theorem}

\author[S. L'Innocente]{Sonia L'Innocente}
\address{Sonia L'Innocente \endgraf
School of Science and Technology -- University of Camerino\endgraf
via Madonna delle Carceri 9, Camerino, Italy\endgraf
Email: \texttt{sonia.linnocente@unicam.it} -- ORCID: \texttt{0000-0002-9224-7451}}

\author[F.G. Russo]{Francesco G. Russo}
\address{Francesco G. Russo \endgraf
School of Science and Technology -- University of Camerino\endgraf
via Madonna delle Carceri 9, Camerino, Italy\endgraf
and\endgraf
Department of Mathematics and Applied Mathematics -- University of the Western Cape\endgraf
Private Bag X17, 7535,  Bellville, South Africa\endgraf
and\endgraf
Department of Mathematics and Applied Mathematics -- University of Cape Town\endgraf
Private Bag X1, Rondebosch 7701, Cape Town, South Africa\endgraf
Email: \texttt{francesco.russo@unicam.it} -- ORCID: \texttt{0000-0002-5889-783X}}

\author[I. Svampa]{Ilaria Svampa}
\address{Ilaria Svampa\endgraf
Department Mathematik/Informatik--Abteilung Informatik -- Universit\"at zu K\"oln \endgraf
Albertus-Magnus-Platz, 50923, K\"oln, Germany\endgraf
Email: \texttt{ilaria.svampa@uni-koeln.de} -- ORCID: \texttt{0000-0002-1389-0319}
}

\begin{document}

\begin{abstract} A classical result of Schur of 1904 shows that an abstract group with finite central quotient has finite derived subgroup. Schur's Theorem has many important consequences and generalizations, which have been extensively investigated in the literature. We develop a new dynamical interpretation of Schur's Theorem for  locally compact  groups, using the notion of  topological entropy of Adler, Konheim and McAndrew.  We first consider groups with compact central quotient, introduced and called $\mathsf{Z}$-groups by Grosser and Moskowitz in the 1960s,
proving that if $G$ is a connected group such that $G/Z(G)$ is compact and with  continuous endomorphisms of finite 
topological entropy, then also $\overline{[G,G]}$   is compact and with continuous endomorphisms of finite topological entropy. The connectedness assumption is essential, since its absence allows us to construct a profinite group as counterexample. Furthermore, we study the Heisenberg groups $\mathbb{H}_n(R)$ on certain locally compact rings $R$ as a framework in which a dynamical Schur-Type Theorem persists, even though the central quotient need not be compact. In particular, we  find new formulas for the $p$-rank  of these Heisenberg groups.
\\
\\
\textsc{Keywords and Phrases}: Locally compact group;  Schur's Theorem; Z-group; Heisenberg group; topological entropy; p-rank.\\
\textsc{Mathematics Subject Classification 2020}: 22A05, 22A25, 22D45, 37B40, 54C70.

\end{abstract}

\maketitle

\section{Introduction and  statement of the main results} 

Following \cite{HHR, heyer, hofmor},  a topological group is a group endowed with a topology for which the multiplication and inversion maps are continuous. Throughout the paper, all topological groups are assumed to be Hausdorff. In particular, compact and locally compact groups are understood in this sense. Discrete groups provide basic examples, and finite groups can be viewed as compact groups equipped with the discrete topology. For a group $G$, \begin{equation}\label{centerdefinition}Z(G)\coloneqq\{g \in G \mid [g,x]=1  \ \ \forall x \in G\}\end{equation} denotes the \textit{center of} $G$, while the \textit{derived subgroup of} $G$ is defined by
\begin{equation}\label{commutatordefinition}
[G,G]\coloneqq\langle [g,x]\coloneqq g^{-1}x^{-1}gx  \ \ \mid  \ \ x,g \in G \rangle.
\end{equation}

A classical theorem of Schur \cite{schur1} %(see also \cite[10.1.4]{rob}) 
states that if $G$ is a discrete (infinite) group such that $G/Z(G)$ is finite, then $[G,G]$ is finite. This result has played a fundamental role in group theory and has motivated a wide range of generalizations. In the framework of locally compact groups, the finiteness conditions are no longer appropriate as formulated for infinite discrete groups. A natural approach is therefore to replace the finiteness conditions by suitable dynamical properties which involve the notion of topological entropy. We adopt this perspective by considering the topological entropy of continuous endomorphisms, introduced by Adler, Konheim and McAndrew \cite{AdlerKonheimMcAndrew} and building on earlier ideas of Kolmogorov \cite{Kolmogorov} and Sinai \cite{Sinai}.  It has since been extended and applied in various contexts, including locally compact groups (see, for instance, \cite{B, GiordanoBrunoDikran, daf,  GBV, H, LW, RussoWaka}). In this setting, the topological entropy provides a quantitative tool to study the behavior of continuous endomorphisms and their interaction with the topological structure of the group.

Throughout this paper, we use Hood's uniform extension of the
Adler--Konheim--McAndrew topological entropy \cite{AdlerKonheimMcAndrew, B, H}, applied to the canonical
left uniformity of a locally compact group. More precisely, consider a locally compact group $G$, with $\mu$ left Haar measure on $G$, $\mathcal{CT}(G)$  collection of all the compact neighborhoods of the identity of $G$ and $\mathrm{End}(G)$ monoid of all the continuous endomorphisms of $G$. Note that the invertible elements of $\mathrm{End}(G)$ form the group of continuous automorphisms $\mathrm{Aut}(G)$. If $n\in\mathbb{N}$, $V\in\mathcal{CT}(G), \varphi\in \mathrm{End}(G)$ and 
$\varphi^{-n}(V)\coloneqq   \underbrace{\varphi^{-1} \left(\varphi^{-1}\dots( \varphi^{-1}(V))\dots\right)}_{n\mbox{-times}}$, we define the \emph{$n$-th $\varphi$-cotrajectory of $V$} as the set
$C_n(\varphi,V) \coloneqq V\cap\varphi^{-1}(V)\cap\ldots\cap\varphi^{-n+1}(V)$ which belongs to $\mathcal{CT}(G).$
The \emph{topological entropy of $\varphi\in \mathrm{End}(G)$ with respect to $V\in\mathcal{CT}(G)$} is defined by
\begin{equation}\label{eq:topentropPhiV}
\mathsf{H}_{\mathsf{top}}(\varphi,V)\coloneqq\limsup_{n\to\infty} \frac{-\log\mu(C_n(\varphi,V))}{n},
\end{equation}
and the \emph{topological entropy of $\varphi\in \mathrm{End}(G)$} is defined by
\begin{equation}\label{eq:topentropPhi}
\mathsf{h}_{\mathsf{top}}(\varphi)\coloneqq\sup\left\{\mathsf{H}_{\mathsf{top}}(\varphi,V)   \ \mid \ V\in\mathcal{CT}(G)\right\}.
\end{equation} 
The \emph{topological entropy of $G$} is defined by
\begin{equation}\label{importantset}
\mathsf{E}_{\mathsf{top}}(G)\coloneqq\{\mathsf{h}_{\mathsf{top}}(\varphi) \ \mid \  \varphi\in \mathrm{End}(G)\}.
\end{equation}
We investigate the cardinality of \eqref{importantset}, as done in \cite{LW, RussoWaka,   Y}.  Following  \cite{AdlerKonheimMcAndrew, GBV, RussoWaka, RussoWakabis, olwethuadditional}, we may introduce
\begin{equation}\label{classes}
\mathfrak{E}_0\coloneqq\{G \ \mbox{locally compact group}\ \mid \  \mathsf{E}_{\mathsf{top}}(G)=\{0\}\},\end{equation} \[\mathfrak{E}_{<\infty}\coloneqq \{G  \ \mbox{locally compact group} \ \mid \ \mathsf{E}_{\mathsf{top}}(G ) \subseteq [0, +\infty[
\}\]
and of course, $\mathfrak{E}_0 \subseteq \mathfrak{E}_{<\infty}$. Moreover finite groups and discrete groups belong to $\mathfrak{E}_0$. 

The class $\mathfrak{E}_{<\infty}$ turns out to be a significant replacement of the finiteness condition in Schur's Theorem \cite{schur1}. Locally compact groups with compact central quotient have been extensively studied; it is known from results of Grosser and Moskowitz \cite{gm1} that for a locally compact group $G$ if $G/Z(G)$ is compact, then also $\overline{[G,G]}$ is compact. This can be regarded as a topological version of Schur's Theorem and will be emphasized in Lemmas \ref{examplesofsingroups} and \ref{theor:structTh:Z-group} later on. Note that in a locally compact group $G$ the \textit{identity component} $G_0$ of $G$ is a closed fully invariant subgroup, which turns out to be the largest closed connected normal subgroup of $G$ containing the identity element \cite[Exercise E1.12]{hofmor}. In particular, a locally compact group $G$ is connected if and only if $G=G_0$, while $G/G_0$ is a totally disconnected locally compact group; a locally compact group $G$ is totally disconnected if and only if $G_0=1$. Our first main result shows that, under the same hypothesis, the finiteness of the topological entropy of continuous endomorphisms is inherited from the central quotient to the derived subgroup in case of connected groups. 

\begin{theorem}[Dynamical Formulation of Schur's Theorem]\label{1stmain} 
Let $G$ be a locally compact group such that $G/Z(G)$ is a compact group. If $G=G_0$ and $G/Z(G) \in \mathfrak{E}_{<\infty}$, then $\overline{[G,G]}\in %\mathfrak{E}_0\subseteq
\mathfrak{E}_{<\infty}$. 
On the other hand, if $G\neq G_0$ then the
conclusion does not hold in general: there exists a centerfree profinite group $H$ such that $H/Z(H)%=H\in \mathfrak{E}_0\subseteq
\in \mathfrak{E}_{<\infty}$ and $\overline{[H,H]}\not\in \mathfrak{E}_{<\infty}$.
\end{theorem}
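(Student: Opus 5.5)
For the connected case I will use the structure theory of connected $\mathsf{Z}$-groups due to Grosser and Moskowitz (Lemmas \ref{examplesofsingroups} and \ref{theor:structTh:Z-group}). A connected locally compact group with $G/Z(G)$ compact splits as $G = V \times K$. Here $V\cong\mathbb{R}^n$ is a vector group and $K$ is a compact connected group. Then $[G,G]=[K,K]$, and $\overline{[G,G]}=\overline{[K,K]}=K'$ is a compact connected semisimple group. A compact connected semisimple group is perfect, so in fact $[K,K]$ is already closed.

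The goal is therefore to show that every compact connected semisimple group $S$ lies in $\mathfrak{E}_{<\infty}$, and even in $\mathfrak{E}_0$. I would argue this in three steps.

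\textbf{Step 1: reduction to automorphisms.} Write $S = \widetilde{S}/D$, where $\widetilde{S}=\prod_{i\in I} S_i$ is a product of simple simply connected compact Lie groups and $D$ is a central totally disconnected subgroup. Let $\varphi\in\mathrm{End}(S)$. Its image is a closed connected normal subgroup, hence a product of some of the simple factors. Its kernel is likewise, modulo centre, a product of factors. So $\varphi$ induces a partial injection on the index set $I$, together with isomorphisms between the corresponding factors, up to centre.

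\textbf{Step 2: vanishing of entropy.} Each factor $S_i$ has finite-order outer automorphism group, and its inner automorphisms are isometries for a bi-invariant metric. Hence $\varphi$ acts as an isometry of a bi-invariant metric on $S$ that is compatible with the product structure. Entropy with respect to a compact neighbourhood $V$ depends only on the finitely many coordinates that $V$ constrains. Along the orbits of the index map those coordinates are moved isometrically, so $\mu(C_n(\varphi,V))$ decays at most exponentially with a fixed rate. I expect the rate to be in fact $0$, by the Bowen-type formula for isometries. This gives $\mathsf{h}_{\mathsf{top}}(\varphi)=0$.

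\textbf{Main obstacle.} The delicate point is Step 2 when $I$ is infinite and the index map shifts factors. A shift on $\prod_{\mathbb{N}} S_i$ with $S_i\cong SU(2)$ seemingly has infinite entropy, exactly as the Bernoulli shift does. The plan must handle this carefully. Here the hypothesis $G/Z(G)\in\mathfrak{E}_{<\infty}$ should enter. Note that $G/Z(G)\cong K/Z(K)\cong \prod S_i/Z(S_i)$. Shift-type endomorphisms of $S$ descend to $G/Z(G)$, and the entropy of such a descended shift is infinite, which the hypothesis rules out. Concretely, I would show that $\mathsf{h}_{\mathsf{top}}(\varphi)$ is bounded by the entropy of the induced endomorphism $\bar\varphi$ of $S/Z(S)=G/Z(G)$ plus a term coming from the totally disconnected centre. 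Since $Z(S)$ is profinite abelian and central with connected image, it should contribute zero. Alternatively I would show that finiteness forces the index map to have only finite orbits, which reduces Step 2 to finite products of Lie groups, where entropy is $\sum\log|\lambda|$ over the eigenvalues of modulus greater than $1$ of the differential, and this is $0$ for compact groups.

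For the counterexample I would take $H=\prod_{n\in\mathbb{N}} A_5$, or more generally an infinite product of a nonabelian finite simple group $F$. This group is profinite and centerless, so $H/Z(H)=H$. It is also perfect, so $\overline{[H,H]}=H$. The shift endomorphism $\sigma(x_0,x_1,\dots)=(x_1,x_2,\dots)$ is continuous. For $V=\{x : x_0=1\}$ one gets $\mu(C_n(\sigma,V))=|F|^{-n}$, so $\mathsf{h}_{\mathsf{top}}(\sigma)\ge\log|F|$. Using $V_k=\{x:x_0=\dots=x_{k-1}=1\}$ only gives $k\log|F|$ over $k$, so I would instead take the product $F^{\mathbb{N}\times\mathbb{N}}$ with a shift acting on one coordinate block, or use powers of the shift, to get unbounded values. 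Then $\mathsf{E}_{\mathsf{top}}(H)\ni +\infty$ (or $\sup=\infty$), so $\overline{[H,H]}\notin\mathfrak{E}_{<\infty}$.

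The tension is that the same $H$ equals $H/Z(H)$, so the stated example requires $H/Z(H)\in\mathfrak{E}_{<\infty}$ while $\overline{[H,H]}\notin\mathfrak{E}_{<\infty}$. For that I must choose a centerfree profinite $H$ whose derived subgroup is a proper closed subgroup with wild endomorphisms, while $H$ itself is rigid. A candidate is $H=\prod_n \mathrm{Sym}(m_n)$ with pairwise distinct $m_n\geq 5$. Distinct factor sizes kill shift-type endomorphisms of $H$. The derived subgroup $\prod_n \mathrm{Alt}(m_n)$ also resists shifts, so this candidate needs further adjustment. Pinning down this example is the second real difficulty of the proof.
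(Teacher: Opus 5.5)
\textbf{Connected case.} Your reduction to $S=[K,K]$, a compact connected semisimple group, matches the paper. After that the plan breaks down. As you notice yourself, the announced goal that every such $S$ lies in $\mathfrak{E}_{<\infty}$ is false.

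Step~1 is also false. The image of a continuous endomorphism of $S$ need not be normal: on $SU(2)^2$, the map $(x,y)\mapsto(x,x)$ has the diagonal as image. Simple factors can also be sent into non-isomorphic ones: in $\prod_{n\ge2}SU(n)$ take $(x_2,x_3,\dots)\mapsto(1,\iota_2(x_2),\iota_3(x_3),\dots)$ with $\iota_n\colon SU(n)\hookrightarrow SU(n+1)$. So there is no partial injection of $I$ with factor isomorphisms, and the isometry argument of Step~2 has nothing to rest on.

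The same example shows that finite multiplicities do \emph{not} reduce the problem to finite products of Lie groups. Infinitely many pairwise non-isomorphic Lie components are allowed, and shift-like endomorphisms between them survive.

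Your descent idea is unsafe too, because $Z(S)$ need not be invariant under endomorphisms of $S$. In $SU(2)\times SO(5)$, let $j\colon SU(2)\to SO(4)\subseteq SO(5)$ be left quaternion multiplication. The map $(x,y)\mapsto(1,j(x))$ sends the central element $(-1,1)$ to the noncentral element $(1,\mathrm{diag}(-1,-1,-1,-1,1))$. So endomorphisms of $S$ do not in general induce endomorphisms of $S/Z(S)$.

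The paper applies the hypothesis only on $S/Z(S)$ itself. By Lemma~\ref{lem:lie-components}, $S/Z(S)\simeq\prod_j L_j^{\kappa_j}$. If some $\kappa_j$ were infinite, a Bernoulli shift on a direct factor $L_j^{\mathbb N}$, extended by the identity, would be an endomorphism of $S/Z(S)$ of infinite entropy. Hence all multiplicities are finite, and then \cite[Theorem 6.9]{DikranSanchis} gives $S\in\mathfrak{E}_{<\infty}$. That theorem, or a genuine proof of it that handles infinitely many non-isomorphic components, is the ingredient your plan is missing.

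\textbf{Counterexample.} Your proposal does not yet contain one. $\prod_n A_5$ fails for the reason you give.

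$\prod_n\mathrm{Sym}(m_n)$ fails as well, and distinct sizes do not prevent cross-homomorphisms: there are natural embeddings $\mathrm{Sym}(m)\hookrightarrow\mathrm{Sym}(m')$ and maps through the sign. In fact already $H\notin\mathfrak{E}_{<\infty}$:
\begin{itemize}
\item let $s\colon H\to\mathbb{Z}(2)^{\mathbb N}$ be the coordinatewise sign;
\item let $t$ send the $n$-th basis vector to a transposition of $\mathrm{Sym}(m_n)$;
\item let $\beta$ be a Bernoulli shift on $\mathbb{Z}(2)^{\mathbb N}\simeq(\mathbb{Z}(2)^{\mathbb N})^{\mathbb N}$.
\end{itemize}
Then $t\circ\beta\circ s$ kills $\prod_n\mathrm{Alt}(m_n)$ and induces $\beta$ on the quotient, so it has infinite entropy by Lemma~\ref{teorspezzamento}(i).

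Also, unbounded but finite values, such as those from powers of a shift, would not suffice. You need a single endomorphism of infinite entropy, which the block shift on $(F^{\mathbb N})^{\mathbb N}$ provides.

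The missing idea is to choose finite factors $D_n$ with $\mathrm{Hom}(D_m,D_n)=1$ for $m\neq n$, whose derived subgroups are nevertheless all of the same kind. The paper takes $D_n=\mathbb{F}_{2^{d_n}}\rtimes\mathbb{Z}(p_n)$ with pairwise distinct odd primes $p_n$, the generator acting by multiplication by an element of order $p_n$. The abelianizations $\mathbb{Z}(p_n)$ have distinct prime orders and the derived subgroups are $2$-groups, which kills all cross-homomorphisms. Hence every continuous endomorphism of $H=\prod_n D_n$ is coordinatewise, the compact open subgroups $V_F$ are fully invariant, and $H=H/Z(H)\in\mathfrak{E}_0$. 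Meanwhile $[H,H]=\prod_n\mathbb{F}_{2^{d_n}}\simeq(\mathbb{Z}(2)^{\mathbb N})^{\mathbb N}$ carries a Bernoulli shift of infinite entropy.
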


The novelty of this result lies in the transfer of conditions of finiteness of the topological entropy of continuous endomorphisms from $G/Z(G)$ to $\overline{[G,G]}$, while the compactness of $\overline{[G,G]}$ under the compactness of $G/Z(G)$ is classical. The connectedness hypothesis is not a technical artifact: if $G$ is not connected, the conclusion of Theorem \ref{1stmain} does not hold in general, since a counterexample appears for profinite groups (i.e. compact groups which are totally disconnected). The obstruction in the profinite example is also conceptually important: endomorphisms of a fully invariant subgroup need not extend to endomorphisms of the ambient group. Consequently, entropy bounds on the ambient group do not automatically control all endomorphisms of its commutator subgroup.

Our second main result concerns Heisenberg groups $\mathbb{H}_n(R)$ over a concrete family of locally compact rings $R$ whose additive group $R^+$ is a locally compact abelian $p$-group of finite $p$-rank. In this setting, the central quotient need not be compact, but one can still obtain finiteness properties of the topological entropy for both $\mathbb{H}_n(R)/Z(\mathbb{H}_n(R))$ and $\overline{[\mathbb{H}_n(R),\mathbb{H}_n(R)]}$.

\begin{theorem}\label{2ndmain}
Let $p$ be a prime number, $n$ a positive integer and $\varepsilon,\zeta$ nonnegative integers such that $\varepsilon+\zeta>0$. Let $R\coloneqq \mathbb{Q}_p^\varepsilon \times \mathbb{Z}_p^\zeta$ be the ring obtained by the product of $\varepsilon$ copies of the $p$-adic rationals $\mathbb{Q}_p$ and by $\zeta$ copies of the $p$-adic integers $\mathbb{Z}_p$. Then the Heisenberg group $\mathbb{H}_{n}(R)$ is a periodic locally compact $p$-group of nilpotency class two such that  
\begin{equation}\label{eq:rankHnR}
\mathrm{rank}_p\bigl(\mathbb{H}_{n}(R)\bigr)  = (2n+1)\,\mathrm{rank}_p(R^+)\quad \textup{and}\quad \mathrm{rank}_p\bigl(\mathbb{H}_{n}(R)/Z(\mathbb{H}_{n}(R))\bigr)=2n\,\mathrm{rank}_p(R^+),
\end{equation} 
where $\mathrm{rank}_p(R^+)=\varepsilon+\zeta$.
In particular, both $\mathbb{H}_{n}(R)/Z(\mathbb{H}_{n}(R))  \in \mathfrak{E}_{<\infty}$ and $\overline{[\mathbb{H}_{n}(R),\mathbb{H}_{n}(R)]} \in  \mathfrak{E}_{<\infty}$.
\end{theorem}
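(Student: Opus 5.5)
We will prove Theorem \ref{2ndmain} in three steps: an explicit structural description of $\mathbb{H}_n(R)$, the $p$-rank computation, and the entropy consequences.

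\textbf{Structure.} Realize $\mathbb{H}_n(R)$ as $R^n\times R^n\times R$ with the product topology and multiplication
\[
(a,b,c)(a',b',c')=(a+a',\,b+b',\,c+c'+a\cdot b'),
\]
where $a\cdot b'=\sum_i a_ib'_i$. Since $R$ is a locally compact ring, this is a locally compact group. Commutators are $[(a,b,c),(a',b',c')]=(0,0,a\cdot b'-a'\cdot b)$. Because $R$ is commutative with identity, the vectors $a=e_i$, $b'=r e_i$ give every element of $0\times0\times R$ as a commutator. Testing against $(e_i,0,0)$ and $(0,e_i,0)$ shows that $Z(\mathbb{H}_n(R))=0\times0\times R=[\mathbb{H}_n(R),\mathbb{H}_n(R)]$, which is closed. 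Hence the group has nilpotency class exactly two, and $\mathbb{H}_n(R)/Z\cong (R^+)^{2n}$ topologically.

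\textbf{Periodicity and rank.} $R^+=\mathbb{Q}_p^\varepsilon\times\mathbb{Z}_p^\zeta$ is a periodic locally compact abelian $p$-group in the topological sense: every element generates a subgroup with compact closure that is pro-$p$. Indeed $\overline{\langle x\rangle}\subseteq p^{-k}\mathbb{Z}_p$ componentwise. The class of periodic locally compact $p$-groups is closed under extensions of closed normal subgroups, so the extension $Z\to\mathbb{H}_n(R)\to(R^+)^{2n}$ shows that $\mathbb{H}_n(R)$ is a periodic locally compact $p$-group.

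For the rank, we use the standard values $\mathrm{rank}_p(\mathbb{Q}_p)=\mathrm{rank}_p(\mathbb{Z}_p)=1$ and additivity of $p$-rank over finite products of abelian groups. This gives $\mathrm{rank}_p(R^+)=\varepsilon+\zeta$ and $\mathrm{rank}_p(\mathbb{H}_n/Z)=2n(\varepsilon+\zeta)$.

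For $\mathbb{H}_n(R)$ itself, we use that $p$-rank is subadditive on extensions, which gives $\le(2n+1)\,\mathrm{rank}_p(R^+)$. For the reverse inequality, we exhibit a closed subgroup with exactly that rank. The natural choice is the compact open subgroup $\mathbb{H}_n(\mathbb{Z}_p^{\varepsilon+\zeta})$, a pro-$p$ group, whose minimal number of topological generators we compute via its Frattini quotient. Here is the subtlety: in the pro-$p$ Heisenberg group the commutator subgroup already lies in the Frattini subgroup, so the generator count of that subgroup is only $2n(\varepsilon+\zeta)$. Consequently the rank has to be measured as the sectional/Prüfer rank (supremum over closed subgroups of the minimal number of generators). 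The lower bound is then witnessed by the abelian closed subgroup $\{(a,0,c)\}\cong R^{n+1}$ together with a suitable further subgroup, or more directly by the subgroup $p^k\mathbb{Z}_p$-scaled pieces, such as $\{(pa,pb,c)\}$ with $c\in\mathbb{Z}_p^{\varepsilon+\zeta}$. In that subgroup the center is no longer inside the Frattini subgroup, since commutators land in $p^2$. We then compute its Frattini quotient and obtain $(2n+1)(\varepsilon+\zeta)$ independent generators.

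\textbf{Main obstacle.} Getting this lower bound with the paper's precise definition of $\mathrm{rank}_p$ is the step we expect to be hardest. We will first try the subgroup $\{(pa,pb,c)\}$ and verify its Frattini quotient has the stated dimension.

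\textbf{Entropy.} We invoke the earlier results from the cited literature (e.g.\ \cite{RussoWaka, RussoWakabis}): a periodic locally compact abelian $p$-group of finite $p$-rank belongs to $\mathfrak{E}_{<\infty}$. The bound comes from the fact that continuous endomorphisms of $\mathbb{Q}_p^m\times\mathbb{Z}_p^{m'}$ act by $\mathbb{Z}_p$/$\mathbb{Q}_p$-linear matrices, whose entropy is a finite sum of $\log|\lambda|_p$ terms. Applying this to $(R^+)^{2n}\cong\mathbb{H}_n(R)/Z$ and to $\overline{[\mathbb{H}_n(R),\mathbb{H}_n(R)]}=Z\cong R^+$ yields both memberships.
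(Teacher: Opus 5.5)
Your overall strategy matches the paper's:
\begin{itemize}
\item identify $Z(\mathbb{H}_n(R))=[\mathbb{H}_n(R),\mathbb{H}_n(R)]\simeq R^+$ and $\mathbb{H}_n(R)/Z\simeq (R^+)^{2n}$;
\item read off their $p$-ranks and entropy from Lemmas \ref{p-rank-finite} and \ref{prankfin-entropy};
\item bound $\mathrm{rank}_p(\mathbb{H}_n(R))$ above by splitting along the center;
\item bound it below by a rescaled compact open subgroup.
\end{itemize}

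Your witness $W=\{(pa,pb,c)\mid a,b\in A^n,\ c\in A\}$, with $A=\mathbb{Z}_p^{\varepsilon}\times\mathbb{Z}_p^{\zeta}$, is a harmless variant of the paper's $U$, which takes $c\in pA$. The computation you only announce does go through:
\begin{itemize}
\item $[W,W]$ generates $0\times0\times p^2A$;
\item $(0,0,c)^p=(0,0,pc)$ and $(pa,pb,c)^p=(p^2a,p^2b,pc+\binom{p}{2}p^2\,a\cdot b)$;
\item hence $\mathrm{Frat}(W)=\{(p^2a,p^2b,pc)\}$ and $W/\mathrm{Frat}(W)\simeq\mathbb{Z}(p)^{(2n+1)r}$, so $d(W)=(2n+1)r$.
\end{itemize}
The paper instead maps $U$ onto $(pA/p^2A)^{2n+1}$ and exhibits $(2n+1)r$ explicit generators; the two arguments are equivalent. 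Drop your first suggestion, though. Abelian subgroups such as $\{(a,0,c)\}$ give only $(n+1)r$. Since $\mathrm{rank}_p$ is a supremum over single subgroups, you cannot add the contribution of ``a further subgroup''.

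The step that is not justified as written is the upper bound via ``$p$-rank is subadditive on extensions''. The paper defines the rank as the supremum of $d(L)$ over topologically finitely generated closed $L$, and here the ambient group is noncompact, so subadditivity is not automatic. Two things can go wrong for such an $L$:
\begin{itemize}
\item $L\cap Z$ could a priori look like $\mathbb{Q}_p$, which has $p$-rank $1$ but $d=\infty$;
\item $\pi(L)$ need not be closed, so generators of $\overline{\pi(L)}$ need not lift into $L$.
\end{itemize}

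What makes it work here is Claim 2 in the paper's proof. Put $B_s=p^{-s}\mathbb{Z}_p^\varepsilon\times\mathbb{Z}_p^\zeta$. Then the sets $K_s=\{(a,b,c)\mid a,b\in B_s^n,\ c\in B_{2s}\}$ are compact open subgroups exhausting $\mathbb{H}_n(R)$. So every topologically finitely generated closed $L$ lies in some $K_s$ and is compact. Consequently:
\begin{itemize}
\item $\pi(L)$ is a closed subgroup of $B_s^{2n}\simeq\mathbb{Z}_p^{2nr}$, hence a free $\mathbb{Z}_p$-module of rank at most $2nr$;
\item $L\cap Z$ is a closed subgroup of $B_{2s}\simeq\mathbb{Z}_p^{r}$, hence free of rank at most $r$.
\end{itemize}
Lift generators of $\pi(L)$ and add generators of $L\cap Z$. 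The closed subgroup they generate is compact, so its image is closed and equals $\pi(L)$, and therefore it equals $L$. This gives $d(L)\le(2n+1)r$.

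The same exhaustion also yields periodicity directly. So you do not need the general extension-closure property of periodic locally compact $p$-groups, which you assert without proof.
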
 

Notations and terminology (especially those on the concept of  \textit{finite $p$-rank} and  \textit{periodic locally compact groups}) are standard  and follow \cite{HHR, heyer, hofmor, karp, rob}. Moreover, they are recalled in Sections \ref{sec:prelimentropia}, \ref{sec:prelimZgruppi} and \ref{sez:prelimpgrouprankdim} later on. After a series of preliminary results on the notion of topological entropy of  locally compact groups in Section \ref{sec:prelimentropia}, we report some classical results on the representation theory of locally compact groups in Section \ref{sec:prelimZgruppi}, and the most used concepts of rank and dimension in Section \ref{sez:prelimpgrouprankdim} for the topological groups which we are going to investigate. Then we prove our first main result in Section \ref{sez:ourTHEOR1.1}, while our second main theorem is proved in Section \ref{sez:ourTHEOR1.2}.

%%%%%%%%%%%%%%%%%%%%%%%%%%%%%%%%%%%%%%%%%%%%%%
\section{Preliminary results of  topological entropy of locally compact groups}\label{sec:prelimentropia}

Let $G$ be a locally compact group. The topological entropy $\mathsf{h}_{\mathsf{top}}(\varphi)$ of a continuous endomorphism $\varphi$ of $G$ is defined as in Eq. \eqref{eq:topentropPhi}. We say that $V$ is $\varphi$-\textit{invariant} if $\varphi(V)\subseteq V$, i.e. if $\varphi^{-1}(V)\supseteq V$, and that $V$ is \emph{fully invariant} if it is $\varphi$-invariant for all continuous endomorphisms $\varphi$ in $\mathrm{End}(G)$. Then, $C_n(\varphi,V)\subseteq V$ captures how far $V$ is from being invariant under the first $n-1$ iterations of $\varphi^{-1}$. The less $V$ is $\varphi$-invariant, the larger $-\log\mu(C_n(\varphi,V))$ is, and vice versa.  As in many other contexts, the notion of topological entropy quantifies dynamical complexity and chaos: here $\mathsf{h}_{\mathsf{top}}(\varphi)$ reflects the degree to which $\varphi$  moves elements of $G$ and its subsets $V\in \mathcal{CT}(G)$. The topological entropy $\mathsf{E}_{\mathsf{top}}(G)$ of $G$, defined as in Eq. \eqref{importantset}, is the spectrum of values of the topological entropy of all the continuous endomorphisms on $G$. 

Throughout Sections \ref{sec:prelimentropia}--\ref{sez:ourTHEOR1.1}, let $\mathbb{Q}_p$ and
$\mathbb{Z}_p$ denote respectively the additive topological group of $p$-adic rationals and $p$-adic integers (see \cite[Exercise E1.16]{hofmor}), endowed with their usual $p$-adic topologies. Their multiplicative ring structures will be invoked explicitly only in Section \ref{sez:ourTHEOR1.2} for Theorem \ref{2ndmain}. Moreover, let $\mathbb{Q}$ and $\mathbb{R}$ denote respectively the additive topological group of rationals and reals, with standard Euclidean topology. A formula of  Yuzvinski \cite{Y} shows that $\mathsf{h}_{\mathsf{top}}(\varphi)$ can be calculated from the solutions of the characteristic polynomial of $\varphi$ when $\varphi\in\mathrm{End}(\mathbb{R}^{m})$ or $\varphi\in\mathrm{End}(\mathbb{Q}_p^{m})$ for any $m \in\mathbb{N}$, see \cite{daf, LW}. Note also that  $\mathbb{Q}_p$ is a totally disconnected locally compact abelian group, but $\mathbb{Q}$ is not locally compact when endowed with the induced topology from $\mathbb{R}$.  

\begin{example}\label{ex:RQpentropiafin}
Yuzvinski's Formula \cite{B,LW,Y} allows us to calculate the topological entropy of $\mathbb{R}^{m}$ and $\mathbb{Q}_p^{m}$, for every prime $p$ and every $m\in\mathbb{N}$. For $\varphi\in\mathrm{End}(\mathbb{R}^{m})$, one has 
\begin{equation}\label{eq:entropRlog}
 \mathsf{h}_{\mathsf{top}}(\varphi)=\underset{\vert\lambda_{i}\vert>1}{\sum}\log\vert\lambda_{i}\vert,   
\end{equation}
where $\left\{ \lambda_{i} \mid i=1,2,\ldots,m\right\}$ is the family of eigenvalues of $\varphi$. Similarly for $\mathbb{Q}_p^{m}$, denoting the $p$-adic norm by $|\cdot|_p$, one can show that
\begin{equation}
\mathsf{h}_{\mathsf{top}}(\varphi)=\sum_{|\lambda_i|_p>1}\log|\lambda_i|_p,
\end{equation}
where $\left\{ \lambda_{i} \mid i=1,2,\ldots,m\right\}$ is the set of eigenvalues of $\varphi$ in a finite extension of $\mathbb{Q}_p$. In particular, 
\begin{equation}
    \mathbb{R}^{m},\ \mathbb{Q}_p^m\in\mathfrak{E}_{<\infty}\setminus\mathfrak{E}_{0}.
\end{equation}
\end{example}

The characterization of groups in $\mathfrak E_0$ can indicate the presence of structural theorems. This was largely discussed in \cite{AdlerKonheimMcAndrew, B, H, LW,  RussoWaka, RussoWakabis, olwethuadditional}.

\begin{remark} If $G$ is a discrete locally compact group, then its Haar measure $\mu$  becomes the counting measure on $G$ and we find $G\in \mathfrak{E}_0$, see \cite{AdlerKonheimMcAndrew, B, H}. \end{remark}

The finiteness of the topological entropy is less known for large classes of nonabelian locally compact groups, see \cite{RussoWaka, RussoWakabis}; however in the abelian case there are results which  allow us to find conditions under which a group belongs to $\mathfrak E_{<\infty}$ or $\mathfrak E_0$.

\begin{example}[See \cite{olwethuadditional}, Some Totally Disconnected Locally Compact Abelian Groups] \label{lcapg}
\
    \begin{itemize}
        \item[1.] The Pr\"ufer group $\mathbb{Z}(p^{\infty})$ is  discrete abelian in $\mathfrak E_0$.
        \item[2.] The additive group of $p$-adic integers $\mathbb{Z}_{p}$ is compact abelian in $\mathfrak E_0$. Moreover, it has the structure of  $p$-adic ring and is an example of compact ring.
        \item[3.] The additive group of $p$-adic rationals $\mathbb{Q}_{p}$ is  a noncompact locally compact abelian group in $\mathfrak E_{<\infty}$ but not in $\mathfrak E_0$.  It is also a $p$-adic field and in fact it is an example of locally compact field.
    \end{itemize}
\end{example}

\begin{remark}\label{rem:vanDantzig}
We mention some useful results which simplify the calculation of the topological entropy in the case of totally disconnected groups. For a totally disconnected locally compact group $G$, van Dantzig \cite{vanDantzig} proved that the family
\begin{equation}
    \mathcal{B}(G)\coloneqq\{U\leq G \mid U\textup{ compact and open}\}\subseteq \mathcal{CT}(G)
\end{equation}
is a local basis of neighborhoods of the identity of $G$. From \cite[Lemma 3.6]{GBV}, if $G$ is a totally disconnected locally compact group, $\varphi\in\mathrm{End}(G)$ and $U,V\in \mathcal{B}(G)$, $U \leq V$, then $\mathsf{H}_{\mathsf{top}}(\varphi,V) \leq \mathsf{H}_{\mathsf{top}}(\varphi,U)$. As a consequence, when computing the topological entropy of endomorphisms on totally disconnected locally compact groups, the supremum in  \eqref{eq:topentropPhi} can be restricted to the elements of $\mathcal{B}(G)$ only,
\begin{equation}\label{eq:topentropPhitd}
\mathsf{h}_{\mathsf{top}}(\varphi)=\sup\left\{\mathsf{H}_{\mathsf{top}}(\varphi,V)  \  \ \mid \ \ V\in\mathcal{B}(G)\right\},
\end{equation}
moreover \eqref{eq:topentropPhiV} can be simplified for $ V\in\mathcal{B}(G)$, $\varphi\in\mathrm{End}(G)$ as
\begin{equation}
\mathsf{H}_{\mathsf{top}}(\varphi,V)=\lim_{n\to\infty} \frac{\log[V:C_n(\varphi,V)]}{n}.
\end{equation}
\end{remark}

The following example is discussed also in \cite{RussoWaka}.
\begin{example}\label{ex:Zpentropia0}
For every prime $p$ and every $m\in\mathbb{N}$, the group $\mathbb{Z}_p^m$ has a local basis of neighborhoods of $\mathbf{0}$  -- e.g. $\left\{p^k\mathbb{Z}_p^m\mid k\in\mathbb{N}\cup\{0\}\right\}$ 
--- consisting of fully invariant subgroups. Therefore $\mathsf{h}_{\mathsf{top}}(\varphi)=0$ for every $\varphi\in\mathrm{End}(\mathbb{Z}_p^m)$, and $\mathbb{Z}_p^m\in \mathfrak{E}_0$. 
\end{example}

We shall also use the entropy of Bernoulli shifts, which we recall in the following example. 
\begin{example}[Bernoulli shift, see \cite{DikranSanchis}, Example 5.1]\label{ex:Bernoulli}
If $K$ is a compact group, the (\emph{left}) \emph{Bernoulli shift} is
\begin{equation}
\varphi_K\colon K^\mathbb{N}\rightarrow K^\mathbb{N},
 \qquad
 \varphi_K(x_1,x_2,x_3,\ldots)\coloneqq(x_2,x_3,\ldots).
\end{equation}
Its entropy is
\begin{equation}
 \mathsf{h}_{\mathsf{top}}(\varphi_K)=\log|K|,
\end{equation}
which is finite when $K$ is finite, while it is intended to be $\infty$ for an infinite group $K$.
\end{example}

We end with some \emph{Addition Theorems} which balance the finiteness of the topological entropy. 
\begin{lemma}[Addition Theorems]\label{teorspezzamento}
Let $G$ be a locally compact group, $\varphi\in \mathrm{End}(G)$, $N$ a $\varphi$-invariant closed normal subgroup of $G$, and  $\varphi_{G/N}\in \mathrm{End}(G/N)$ the endomorphism induced by $\varphi$ as $\varphi_{G/N}(gN)\coloneqq \varphi(g)N$ for all $g\in G$.
\begin{itemize}
\item[{\rm (i).}] Then \cite[Lemma 2.3(a)]{daf} (see also \cite{AdlerKonheimMcAndrew}, \cite{H}) implies
\[
\mathsf{h}_{\mathsf{top}}(\left.\varphi\right\rvert_N) \le \mathsf{h}_{\mathsf{top}}(\varphi) \qquad \textup{and}\qquad \mathsf{h}_{\mathsf{top}}(\varphi_{G/N})\le \mathsf{h}_{\mathsf{top}}(\varphi).
\]
\item[{\rm (ii).}] If $G$ is compact, then \cite[Theorem 4.3.4]{GiordanoBrunoDikran} (see also \cite{Y,B,GBV}) implies
\[
\mathsf{h}_{\mathsf{top}}(\varphi)= \mathsf{h}_{\mathsf{top}}(\left.\varphi\right\rvert_N)+\mathsf{h}_{\mathsf{top}}(\varphi_{G/N}).
\]
\item[{\rm (iii).}]
If $G$ is totally disconnected, $\varphi(N)=N$ and $\ker\varphi\subseteq N$, then \cite[Theorem 1.2]{GBV} implies
\[
\mathsf{h}_{\mathsf{top}}(\varphi)= \mathsf{h}_{\mathsf{top}}(\left.\varphi\right\rvert_N)+\mathsf{h}_{\mathsf{top}}(\varphi_{G/N}).
\]
\item[{\rm (iv).}] If $G$ is compact and $N$ is fully invariant such that $N,\,G/N\in\mathfrak{E}_{<\infty}$, then $G\in\mathfrak{E}_{<\infty}$. The same conclusion holds even if we replace $\mathfrak{E}_{<\infty}$ with $\mathfrak{E}_0$. \end{itemize}
\end{lemma}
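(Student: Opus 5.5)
Items (i)--(iii) are imported directly from the cited sources (\cite[Lemma 2.3(a)]{daf}, \cite[Theorem 4.3.4]{GiordanoBrunoDikran}, \cite[Theorem 1.2]{GBV}). For them, the only work is to check that the hypotheses match. In each case $N$ is closed, normal and $\varphi$-invariant, so $\left.\varphi\right\rvert_N\in\mathrm{End}(N)$ and $\varphi_{G/N}$ is a well-defined continuous endomorphism of the locally compact group $G/N$. For (iii) we also need $\varphi(N)=N$ and $\ker\varphi\subseteq N$, which is exactly the setting of \cite{GBV}. For (i) I would briefly recall the mechanism. For $V\in\mathcal{CT}(N)$, choose $W\in\mathcal{CT}(G)$ with $W\cap N\subseteq V$, and compare the Haar measures of $C_n(\left.\varphi\right\rvert_N,V)$ and $C_n(\varphi,W)$. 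For the quotient, take the image under the projection $\pi$ of a compact neighbourhood $W$ and use $\pi(C_n(\varphi,W))\subseteq C_n(\varphi_{G/N},\pi(W))$ together with Weil's formula. I would not redo these, only cite them.

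The substantive item is (iv), which I would deduce from (ii). Let $G$ be compact and $N$ fully invariant with $N,G/N\in\mathfrak{E}_{<\infty}$, and fix $\varphi\in\mathrm{End}(G)$.

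\textbf{Step 1.} Full invariance gives $\varphi(N)\subseteq N$. Hence $\left.\varphi\right\rvert_N\in\mathrm{End}(N)$, and $\varphi_{G/N}\in\mathrm{End}(G/N)$ is well defined and continuous.

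\textbf{Step 2.} $N$ is closed in $G$, hence compact, and $G/N$ is compact. So $\mathsf{h}_{\mathsf{top}}(\left.\varphi\right\rvert_N)\in\mathsf{E}_{\mathsf{top}}(N)\subseteq[0,+\infty[$ and $\mathsf{h}_{\mathsf{top}}(\varphi_{G/N})\in\mathsf{E}_{\mathsf{top}}(G/N)\subseteq[0,+\infty[$.

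\textbf{Step 3.} By (ii), $\mathsf{h}_{\mathsf{top}}(\varphi)$ is the sum of these two finite values, hence finite. Since $\varphi$ was arbitrary, $\mathsf{E}_{\mathsf{top}}(G)\subseteq[0,+\infty[$.

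For $\mathfrak{E}_0$ the same argument applies: both summands are $0$, so $\mathsf{h}_{\mathsf{top}}(\varphi)=0$.

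The main point to watch is where the hypothesis ``fully invariant'' is needed. Mere normality or characteristic-ness would not make $N$ invariant under non-surjective endomorphisms, and then neither restriction nor quotient would be defined for every $\varphi\in\mathrm{End}(G)$. Once this is secured, the other delicate point is that the addition formula (ii) really requires compactness of $G$. In the general locally compact case only the inequalities of (i) are available, and these give no upper bound for $\mathsf{h}_{\mathsf{top}}(\varphi)$. That is why (iv) is stated only for compact $G$.
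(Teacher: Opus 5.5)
Your proposal is correct and follows the paper's approach. The paper imports (i)--(iii) from the cited sources, and leaves (iv) as the immediate consequence of the compact Addition Theorem (ii) applied to every $\varphi\in\mathrm{End}(G)$, where full invariance of the closed subgroup $N$ ensures that $\left.\varphi\right\rvert_N\in\mathrm{End}(N)$ and $\varphi_{G/N}\in\mathrm{End}(G/N)$ are defined, exactly as in your Steps 1--3 (in your optional sketch of the restriction inequality in (i), you would need $W^{-1}W\cap N\subseteq V$ rather than $W\cap N\subseteq V$, so that each fibre of $C_n(\varphi,W)$ in Weil's formula lies in a translate of $C_n(\left.\varphi\right\rvert_N,V)$; since you cite this anyway, it does not affect the argument).
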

The role of Lie groups is quite special among locally compact groups, as pointed out by Patr\~{a}o \cite{patrao1, patrao2} using the theory of the Li-Yorke pairs \cite[Section 5]{patrao2} (note that Patr\~{a}o uses a different extension of topological entropy to noncompact spaces%defined in terms of admissible open covers and characterized by a variational principle
, which coincides with Hood's topological entropy used here on compact groups).

%%%%%%%%%%%%%%%%%%%%%%%%%%%%%%%%%%%%%%%%%%%%%%%%%%
\section{Preliminary results of representation theory of locally compact groups
}\label{sec:prelimZgruppi}
We recall the notion of central groups, introduced and systematically studied by Grosser and Moskowitz.  This class provides a natural common extension of compact groups and locally compact abelian groups, while retaining strong compactness properties for the nonabelian part of the group.

\begin{definition}[See \cite{gm1}]
A locally compact group $G$ is a $\mathsf{Z}$-\emph{group} (or \textit{central group}) if $G/Z(G)$ is compact.
\end{definition}

We also recall other notions to give context and to locate $\mathsf{Z}$-group within a classical hierarchy of topological groups.

\begin{definition}[See \cite{heyer,palmer,hofmann2,gm1}] \label{relevantnotions}
Let $G$ be a locally compact group. 
\begin{itemize}
    \item [{\rm (i).}] $G$ is a $\mathsf{MAP}$-\emph{group} (or \textit{maximally almost periodic group}) if $G$ has a continuous injective homomorphism into a compact group. 
    \item [{\rm (ii).}] $G$ is a $\mathsf{T}$-\emph{group} (or \textit{Takahashi group}, see \cite{takahashi}) if $G$ is a $\mathsf{MAP}$-group  and $\overline{[G,G]}$ is compact.
    \item [{\rm (iii).}] $G$ is a \emph{Moore group} if all its irreducible representations are finite-dimensional.
    \item[{\rm (iv).}] $G$ is a $\mathsf{SIN}$-\emph{group} (or $G$ is said to have \emph{small invariant neighborhoods}, see \cite{hofmor}) if $G$ has a local basis of neighborhoods of the identity which are invariant under all inner automorphisms of $G$.
   \end{itemize}
Moreover, following the convention in \cite[p. 64, (1)]{hofmann2}, a topological group $G$ is a \emph{pro-Lie group} if for every identity neighborhood $U$ there is a compact normal subgroup $N \subseteq U$ such that $G/N$ is a finite-dimensional Lie group.
\end{definition}
It is convenient to look at the definition of pro-Lie group introducing the filter basis
\begin{equation}
    \mathcal{N}(G)\coloneqq\{N \ \mbox{is a closed normal subgroup of} \ G \mid G/N \ \mbox{is a finite-dimensional Lie group}\},
\end{equation}
ordered by reverse inclusion, and noting that $G$ is a pro-Lie group if and only if $G$ is locally compact and $ \mathcal{N}(G)$ converges to the identity, the latter meaning that for every neighborhood $U_0$ of the identity  there is an $N \in \mathcal{N}(G)$ such that $N \subseteq U_0$. In particular, this is equivalent to saying that a locally compact group $G$ is a pro-Lie group if $G = \underset{N \in \mathcal{N}(G)}{\varprojlim} G/N$ is  projective limit of Lie groups.
These locally compact groups find a natural perspective  of study within the context of representation theory  (where representations are assumed to be continuous and unitary); see \cite{{heyer}}. A classical result which structurally describes locally compact groups is the following:

\begin{lemma}[See \cite{heyer},  Gelfand--Raikov Theorem, 
pp. 13--14]\label{gr}
Let $G$ be a locally compact group. The set of all irreducible representations of $G$ separates the points of $G$. That is, for every $g,h\in G$, there exists a complex Hilbert space $\mathcal{H}$ and a unitary irreducible representation $\rho\colon G\rightarrow\mathrm{U}(\mathcal{H})$ such that $\rho(g)\neq \rho(h)$.     
\end{lemma}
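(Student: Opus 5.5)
The plan is the classical route through positive definite functions, the Gelfand--Naimark--Segal construction and the Krein--Milman theorem. It suffices to show that for every $x\in G$ with $x\neq 1$ there is an irreducible unitary representation $\rho$ with $\rho(x)\neq \mathrm{id}$. Indeed, for $g\neq h$ one then applies this to $x=h^{-1}g$, and $\rho(g)=\rho(h)$ would force $\rho(x)=\mathrm{id}$.

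\emph{Step 1 (a separating positive definite function).} Since $G$ is Hausdorff and locally compact, I choose a compact neighborhood $U$ of $1$ with $xU\cap U=\emptyset$. I take a nonzero continuous function $f\ge 0$ supported in a small symmetric neighborhood $W$ with $W W^{-1}\subseteq U$. Then I set
\[
\varphi(y)\coloneqq\langle \lambda(y)f,f\rangle=\int_G f(y^{-1}z)\,\overline{f(z)}\,d\mu(z),
\]
where $\lambda$ is the left regular representation on $L^2(G,\mu)$. This $\varphi$ is a continuous positive definite function. By the choice of supports, $\varphi(x)=0$, while $\varphi(1)=\|f\|_2^2>0$. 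Rescaling, I may assume $\|\varphi\|_\infty=\varphi(1)=1$.

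\emph{Step 2 (Krein--Milman).} Let $\mathcal{P}_1$ be the set of functions $\psi\in L^\infty(G)=L^1(G)^*$ that are positive definite in the integrated sense $\int\!\!\int \psi(y^{-1}z)\,\overline{u(y)}\,u(z)\,dy\,dz\ge 0$ for all $u\in L^1(G)$, and satisfy $\|\psi\|_\infty\le 1$. This set is convex and weak$^*$ compact by Banach--Alaoglu, since the positivity conditions are weak$^*$ closed. By Krein--Milman, $\mathcal{P}_1$ is the weak$^*$ closed convex hull of its extreme points. I then use the standard fact that these extreme points are $0$ together with the normalized pure positive definite functions. A function $\psi$ of the latter kind agrees almost everywhere with a continuous function $\langle\pi(\cdot)\xi,\xi\rangle$, where $\pi$ is the GNS representation of $\psi$, and $\pi$ is irreducible exactly because $\psi$ is extreme (the usual correspondence between decompositions of $\psi$ and positive operators in $\pi(G)'$, followed by Schur's lemma).

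\emph{Step 3 (contradiction).} Suppose every irreducible $\rho$ satisfies $\rho(x)=\mathrm{id}$. Then every extreme point $\psi$ of $\mathcal{P}_1$ satisfies $\psi(x^{-1}y)=\psi(y)$ for almost every $y$. This is equivalent to
\[
\int_G \psi(y)\bigl(u(xy)-u(y)\bigr)\,d\mu(y)=0\qquad\text{for all } u\in L^1(G),
\]
and this is a weak$^*$ closed linear condition on $\psi$. Hence it passes to the weak$^*$ closed convex hull, which is all of $\mathcal{P}_1$, and in particular to $\varphi$. Because $\varphi$ is continuous, the almost-everywhere identity $\varphi(x^{-1}y)=\varphi(y)$ holds for every $y$. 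Taking $y=x$ gives $1=\varphi(1)=\varphi(x)=0$, a contradiction.

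The main obstacle is Step 2: identifying the extreme points of $\mathcal{P}_1$ with $0$ and the pure states, together with the fact that integrated positive definite $L^\infty$ functions are almost everywhere equal to continuous ones, via GNS applied to $L^1(G)$. I would take these standard facts from \cite{heyer}. The formulation in Step 3, using a weak$^*$ closed linear invariance condition, is chosen precisely to avoid needing Raikov's theorem on uniform convergence on compacta.
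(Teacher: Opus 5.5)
The paper gives no proof of this lemma; it imports the Gelfand--Raikov theorem from Heyer's notes, so there is no internal argument to compare against.

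Your proof is the classical route and it is correct, granted the standard facts you defer to the literature:
\begin{itemize}
\item the extreme points of $\mathcal{P}_1$ are $0$ together with the normalized pure positive-type functions;
\item positive-type $L^\infty$ functions agree locally almost everywhere with continuous coefficient functions $\langle\pi(\cdot)\xi,\xi\rangle$.
\end{itemize}
The route itself is the textbook one: a separating positive-type function $\langle\lambda(\cdot)f,f\rangle$, then GNS, then Krein--Milman on the weak$^*$-compact convex set of positive-type functions of norm at most $1$.

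The one place you depart from the textbook argument is Step 3, and the change is valid. The usual proof invokes Raikov's theorem (on normalized positive-type functions the weak$^*$ topology coincides with compact convergence) in order to approximate $\varphi$ pointwise by combinations of pure functions. You instead encode $\varphi(x)=\varphi(1)$ as the weak$^*$-closed linear condition $\int_G\psi(y)\bigl(u(xy)-u(y)\bigr)\,d\mu(y)=0$ for all $u\in L^1(G)$. This is legitimate for two reasons:
\begin{itemize}
\item left Haar measure is left invariant, so the condition is equivalent to $\psi(x^{-1}\cdot)=\psi$ almost everywhere;
\item two continuous functions that agree locally almost everywhere agree everywhere, because Haar measure charges nonempty open sets.
\end{itemize}
This buys a slightly shorter argument that needs only weak$^*$ density, not the uniform-on-compacta upgrade.

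Two minor points. First, the statement as printed should read $g\neq h$, and your reduction to $x=h^{-1}g\neq 1$ handles this correctly. Second, for non-$\sigma$-finite $G$ the identification $L^\infty(G)=L^1(G)^*$ needs the usual locally-null convention; this does not affect your argument.
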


Another classical result which describes compact groups is due to their \textit{regular representation} on the Hilbert space $L^2(G,\mathbb{K})$ of square-integrable functions from $G$ to $\mathbb{K}$, where $\mathbb{K}$ is either the field of real numbers $\mathbb{R}$ or that of complex numbers $\mathbb{C}$, see \cite[Example 2.12]{hofmor}. This implies what is known as the Fundamental Theorem on Unitary Modules \cite[Theorem 2.22]{hofmor} and consequently a relevant structural result for compact groups:

\begin{proposition}[See \cite{hofmor}, Corollaries 2.27 and 2.29]\label{regrep} If $G$ is a compact group, then the finite-dimensional simple modules separate the points. In particular, every compact group $G$ is isomorphic to a closed subgroup of a product of orthogonal groups $\prod_{j \in J}\mathrm{O}(n_j)$ and of a product $\prod_{j \in J}\mathrm{U}(n_j)$ of unitary groups (with $n_j\in\mathbb{N}$).    
\end{proposition}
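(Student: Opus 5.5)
The statement to prove is Proposition \ref{regrep}. I would derive it from the Peter--Weyl machinery already invoked: the Fundamental Theorem on Unitary Modules \cite[Theorem 2.22]{hofmor} applied to the regular representation of $G$ on $L^2(G,\mathbb{K})$.

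The first step is to show that finite-dimensional simple modules separate points. Fix $g\neq 1$ in $G$. By Urysohn's lemma on the compact Hausdorff space $G$, choose a continuous $f$ with $\lambda_g f\neq f$, where $\lambda_g$ is left translation. The Fundamental Theorem on Unitary Modules says that the sum of the finite-dimensional simple submodules of $L^2(G,\mathbb{K})$ is dense; equivalently, $L^2(G,\mathbb{K})$ is the closed orthogonal sum of its isotypic components, each a sum of finite-dimensional simple modules. Since $\lambda_g f\neq f$ and the projections onto these components commute with $\lambda_g$, some projection $P$ satisfies $\lambda_g Pf\neq Pf$. Hence $g$ acts nontrivially on some finite-dimensional simple submodule $E_g$. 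To separate arbitrary $g\neq h$, apply this to $h^{-1}g$.

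Next I would assemble the embedding. For each $j$ in an index set $J$ of representatives, let $\rho_j\colon G\to \mathrm{U}(n_j)$ be the continuous representation on $E_j\cong\mathbb{C}^{n_j}$. This uses that $G$-invariant inner products exist by averaging with Haar measure, and they are inherited from $L^2$ in any case. Let $\rho=(\rho_j)_j\colon G\to\prod_{j\in J}\mathrm{U}(n_j)$. Then:
\begin{enumerate}[(a)]
\item $\rho$ is continuous, since each coordinate is continuous;
\item $\rho$ is injective, by the separation step;
\item $\rho$ is a continuous bijection from a compact space onto a Hausdorff space, hence a homeomorphism onto its image;
\item the image is compact, hence closed, and it is a subgroup.
\end{enumerate}
So $G$ is isomorphic as a topological group to a closed subgroup of $\prod_{j\in J}\mathrm{U}(n_j)$. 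For the orthogonal version, run the same argument with $\mathbb{K}=\mathbb{R}$, or realify via $\mathrm{U}(n)\hookrightarrow\mathrm{O}(2n)$. Either way, $G$ embeds as a closed subgroup of $\prod_{j\in J}\mathrm{O}(n_j)$, with $n_j$ replaced by $2n_j$ if we realify.

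The main obstacle is the density statement, which is the genuine Peter--Weyl content. We may take it from \cite[Theorem 2.22]{hofmor}. If it were to be proved directly, I would first show that convolution operators $T_k u=k*u$, with $k$ continuous and symmetric, are compact and self-adjoint on $L^2$ and commute with $\lambda_g$. By the spectral theorem their nonzero eigenspaces are finite-dimensional invariant subspaces. Choosing $k$ as an approximate identity then forces $T_kf$ to be close to $f$, which yields density of the finite-dimensional invariant vectors. The remaining steps are routine.
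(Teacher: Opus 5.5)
Your proposal is correct and follows the same route the paper points to. The paper gives no proof of its own and cites \cite[Corollaries 2.27 and 2.29]{hofmor}, where point separation comes from the Fundamental Theorem on Unitary Modules for the regular representation on $L^2(G,\mathbb{K})$, and the closed embedding into $\prod_{j\in J}\mathrm{U}(n_j)$ or $\prod_{j\in J}\mathrm{O}(n_j)$ is then built from the finite-dimensional representations exactly as in your steps (a)--(d). One detail to state explicitly: $\lambda_g f\neq f$ holds in $L^2$, not just pointwise, because Haar measure is positive on every nonempty open set.
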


\begin{remark}[See \cite{heyer}]\label{rem:repnheyergr}
Let $G$ be a locally compact group. A function $f\in\mathscr{C}_{\mathbb{C}}^b(G)$ is called \emph{almost periodic} if one of the following equivalent conditions is satisfied:
\begin{itemize}
    \item[{\rm (i).}] $\{f\circ L(a) \mid a\in G\}$ is relatively compact in $\mathscr{C}_{\mathbb{C}}^b(G)$,    where $L(a)\colon g\mapsto ag$, $g\in G$ is the left translation on $G$;
    \item[{\rm (ii).}] $\{f\circ R(a)\mid a\in G\}$ is relatively compact in $\mathscr{C}_{\mathbb{C}}^b(G)$,    where $R(a)\colon g\mapsto ga$, $g\in G$ is the right translation on $G$.
\end{itemize}
Denoting by $\mathfrak{U}(G)$ the $C^\ast$-algebra of all almost periodic functions on $G$, one can see that $G$ is a $\mathsf{MAP}$-group if and only if $\mathfrak{U}(G)$ separates the points of $G$, or equivalently, the finite-dimensional unitary representations of $G$ separate the points of $G$ (see \cite[pp.~13--15]{heyer}). In particular, by Gelfand-Raikov Theorem \ref{gr}, every Moore group is a $\mathsf{MAP}$-group.
\end{remark}

We collect some facts on the locally compact groups introduced above, which offer interesting examples.

\begin{lemma}[See \cite{heyer}, pp. 13--16, \cite{gm1} and \cite{hofmann2} pp. 64--65] \label{examplesofsingroups} \
\begin{itemize}
\item [{\rm (i).}] If $G$ is a pro-Lie group, then $G$ is a locally compact group.
\item [{\rm (ii).}] If $G$ is a locally compact group such that $G/G_0$ is compact, then $G$ is a pro-Lie group. In particular, every locally compact group contains open pro-Lie subgroups.
\item [{\rm (iii).}] If $G$ is either a compact group or a locally compact abelian group, then $G$ is a $\mathsf{Z}$-group.
\item[{\rm(iv).}] If $G$ is a $\mathsf{Z}$-group, then $G$ is a $\mathsf{T}$-group.
\item [{\rm (v).}] If $G$ is a $\mathsf{T}$-group, then $G$ is a Moore group.
\item [{\rm (vi).}] If $G$ is a Moore group, then $G$ is a $\mathsf{MAP}$-group.
\item[{\rm (vii).}] If $G$ is a Moore group, then $G$ is a $\mathsf{SIN}$-group. 
\item[{\rm (viii).}] If $G$ is a $\mathsf{SIN}$-group, then $G$ is a pro-Lie group. 
\end{itemize}
\end{lemma}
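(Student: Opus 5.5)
Lemma \ref{examplesofsingroups} is a collection of eight implications, mostly classical, so the plan is to verify each item separately, citing the standard source or giving a short argument where one is available.

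Items (i) and (ii) are about pro-Lie groups. For (i), take $G = \varprojlim_{N\in\mathcal{N}(G)} G/N$ with compact kernels $N$ contained in a given identity neighbourhood. Fix a compact normal $N\subseteq U$ with $G/N$ Lie. Since $G/N$ is locally compact and the quotient map is proper (its fibres are the compact $N$), pulling back a compact neighbourhood in $G/N$ gives a compact neighbourhood in $G$. Item (ii) is Yamabe's theorem: if $G/G_0$ is compact (an almost connected group), then every identity neighbourhood contains a compact normal $N$ with $G/N$ Lie. For the ``in particular'', use van Dantzig (Remark \ref{rem:vanDantzig}) on $G/G_0$ to choose a compact open subgroup $U/G_0$. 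Its preimage $U$ is open in $G$, satisfies $U_0=G_0$, and has $U/U_0$ compact, so (ii) applies to $U$.

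For (iii): if $G$ is compact, then $G/Z(G)$ is a continuous image of a compact space, hence compact. If $G$ is abelian, then $G/Z(G)$ is trivial.

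For (iv), I would invoke Grosser--Moskowitz in two parts.
\begin{itemize}
\item[(a)] $\overline{[G,G]}$ is compact. Schur's argument transfers as follows: the commutator map factors through $G/Z(G)\times G/Z(G)$, so the set of commutators is compact, and a Schur-type transfer argument shows that the closure of the group it generates is compact.
\item[(b)] $G$ is $\mathsf{MAP}$. Combine the finite-dimensional representations of the compact group $G/Z(G)$ with characters of the abelian group $Z(G)$, induced or extended via the compact extension, so that they separate points.
\end{itemize}

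For (v), a $\mathsf{T}$-group is a compact extension of an abelian group, $1\to\overline{[G,G]}\to G\to G/\overline{[G,G]}\to1$. By Mackey theory (Clifford's theorem), every irreducible representation restricts on the compact normal subgroup to a sum of finite-dimensional irreducibles, and the abelian quotient then forces finite-dimensionality. Item (vi) is already recorded in Remark \ref{rem:repnheyergr} through Gelfand--Raikov, Lemma \ref{gr}. For (vii) and (viii) I would cite Moore's structure theorem and Grosser--Moskowitz: a Moore group is a projective limit of Lie groups that are finite extensions of $\mathsf{Z}$-groups, hence it is $\mathsf{SIN}$. A $\mathsf{SIN}$-group has an open normal subgroup of the form compact $\times\ \mathbb{R}^n$, and it contains arbitrarily small compact normal subgroups with Lie quotient, which gives pro-Lie in the sense of \cite{hofmann2}.

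The main obstacle is (vii) and, to a lesser extent, the compactness in (iv)(a). Neither admits a short self-contained proof: each rests on deep structure theory (Moore's theorem, and the Grosser--Moskowitz analysis of $\mathsf{Z}$-groups). I would therefore present these items as citations with brief indications of why they hold, rather than attempt full proofs.
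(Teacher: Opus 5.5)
\textbf{Items that are fine.} Your arguments for (i), (ii) (including the van Dantzig step that produces an open almost connected subgroup), (iii) and (vi) are correct. For (iv), (vii) and (viii) you ultimately rely on the cited sources, exactly as the paper does.

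\textbf{The gap is in (v).} Your Clifford argument never uses that a $\mathsf{T}$-group is $\mathsf{MAP}$. Without that hypothesis, the step ``the abelian quotient then forces finite-dimensionality'' is false, for two separate reasons.

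First, the Mackey obstruction can be infinite-dimensional. Take the reduced Heisenberg group $G=\mathbb{H}_1(\mathbb{R})/\{M(0,0,c)\mid c\in\mathbb{Z}\}$. Here $[G,G]=Z(G)\simeq\mathbb{T}$ is compact and $G/[G,G]\simeq\mathbb{R}^2$ is abelian. Yet the irreducible representations with nontrivial central character are infinite-dimensional Schr\"odinger representations. Restriction to $K=[G,G]$ gives a single character, and what remains is an irreducible projective representation of $\mathbb{R}^2$ whose commutator bicharacter is nondegenerate.

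Second, the orbits on $\widehat{K}$ can be infinite. In $L=\mathbb{T}^{\mathbb{Z}}\rtimes\mathbb{Z}$ with the shift action, $\overline{[L,L]}=\mathbb{T}^{\mathbb{Z}}$ is compact and $L/\overline{[L,L]}\simeq\mathbb{Z}$. However, the orbits of $L$ on $\widehat{\mathbb{T}^{\mathbb{Z}}}\simeq\bigoplus_{\mathbb{Z}}\mathbb{Z}$ are infinite, so $\pi|_K$ already involves infinitely many irreducibles. Neither group is $\mathsf{MAP}$.

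\textbf{How $\mathsf{MAP}$ repairs (v).} It has to be used twice.

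First, let $\iota\colon G\to C$ be a continuous injection into a compact group. Then $\iota|_K$ is an embedding, so by Frobenius reciprocity each $\sigma\in\widehat{K}$ occurs in $\rho|_K$ for some finite-dimensional representation $\rho$ of $G$. Since $\rho(g)$ carries the $\sigma$-isotypic part onto the $g\sigma$-isotypic part, the orbit $G\sigma$ is finite. Hence $G_\sigma$ is open of finite index, and $\pi\simeq\mathrm{Ind}_{G_\sigma}^{G}\tau$ with $\tau|_K$ $\sigma$-isotypic.

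Second, the $\sigma$-isotypic part of $\rho$ yields a finite-dimensional projective representation $U$ of $A=G_\sigma/K$. It has the same Mackey obstruction cocycle as the multiplicity space of $\tau$. The operators $U(y)$ are simultaneous eigenvectors of the commuting maps $\mathrm{Ad}\,U(x)$, with eigencharacters $\beta(\cdot,y)$, where $\beta$ is the commutator bicharacter. Finite-dimensionality therefore forces the radical $R$ of $\beta$ to have finite index in $A$. Then $R$ acts by scalars in every irreducible projective representation with that cocycle, and such representations have dimension at most $[A:R]$. Hence $\dim\pi<\infty$. Alternatively, cite Heyer (pp.~13--16) for (v), as the paper does.

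\textbf{A smaller point on (iv)(a).} Your heuristic there does not work as stated. Transfer needs $[G:Z(G)]<\infty$, and a compact set can generate a subgroup whose closure is not compact ($[0,1]$ generates $\mathbb{R}$). The Grosser--Moskowitz route goes through the splitting in Lemma~\ref{theor:structTh:Z-group}: $G\simeq\mathbb{R}^m\times H$, where $H$ has a compact open normal subgroup $K$ with $H/K$ abelian. Hence $[G,G]=[H,H]\subseteq K$, and $\overline{[G,G]}$ is compact.
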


We may visualize the inclusions of Lemma \ref{examplesofsingroups}(iii)--(viii) as follows:
\begin{align*}
&\{\textup{{\small compact groups}}\},\, \{\textup{{\small locally compact abelian groups}}\}\subset \{\mathsf{Z}\textup{-groups}\}\subset\{\mathsf{T}\textup{-groups}\}\subset \{\textup{Moore groups}\};\\
&\{\textup{Moore groups}\}\subset\{\mathsf{MAP}\textup{-groups}\};\\
& \{\textup{Moore groups}\}\subset\{\mathsf{SIN}\textup{-groups}\}\subset \{\textup{pro-Lie groups}\}.
\end{align*}
All the symbols of inclusion displayed above are proper, as one can provide examples where those inclusions are strict. Details can be found in \cite[Preliminaries and Chapter I]{heyer} and in \cite{palmer}. Neither the class of $\mathsf{MAP}$-groups nor the class of pro-Lie groups contains the other. It is very instructive to give a look at \cite[Chapter 0]{hofmann2}, in order to find examples of locally compact groups which are realized as semidirect products and are close or far from those which are listed in Lemma \ref{examplesofsingroups}.

\medskip

The structural result that will be used later is the following relevant condition of splitting for $\mathsf{Z}$-groups by Grosser and Moskowitz.
\begin{lemma}[See \cite{gm1}, Theorem~4.4, and Corollary~2 at p.~331]\label{theor:structTh:Z-group}
Let $G$ be a $\mathsf{Z}$-group. Then $G\simeq W\times H$, where $W\simeq \mathbb{R}^m$  for some $m\geq0$, and $H$ contains a compact open normal subgroup $K$ such that $H/K$ is abelian and discrete.
\end{lemma}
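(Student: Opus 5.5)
The plan is to reduce everything to the abelian quotient $G/\overline{[G,G]}$ and then lift the vector part of that quotient to a central direct factor. First, by Lemma \ref{examplesofsingroups}(iv) a $\mathsf{Z}$-group is a $\mathsf{T}$-group, so $D\coloneqq\overline{[G,G]}$ is compact. Then $G/D$ is a locally compact abelian group. By the classical structure theorem for locally compact abelian groups,
\[
G/D\simeq \mathbb{R}^k\times A,
\]
where $A$ has a compact open subgroup.

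Next, I will look for a central vector subgroup to split off. The center $Z(G)$ is a closed abelian subgroup, hence locally compact abelian, and I choose a vector subgroup $W\simeq\mathbb{R}^m$ of $Z(G)$ that is maximal among closed vector subgroups of $Z(G)$. Since $D$ is compact and $\mathbb{R}^m$ has no nontrivial compact subgroups, $W\cap D=1$. Moreover $WD$ is closed because $D$ is compact, so $W$ maps isomorphically onto a closed vector subgroup of $G/D$. Vector groups are injective in the category of locally compact abelian groups, so closed vector subgroups split off as direct summands; hence $G/D=WD/D\oplus B$ for some closed subgroup $B$. Let $H$ be the preimage of $B$ in $G$. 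Then $H\supseteq D$, so $H$ is normal, and $H\cap W=1$ with $WH=G$. Because $W$ is central, the map $W\times H\to G$, $(w,h)\mapsto wh$, is a continuous bijective homomorphism. It is open (for instance by the open mapping theorem, after noting $G$ is $\sigma$-compact modulo the relevant pieces, or more directly because the splitting of $G/D$ is topological and $D\subseteq H$), so $G\simeq W\times H$.

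It then remains to produce $K$. I claim that $H/D\simeq B$ has a compact open subgroup $U$. Granting this, let $K$ be the preimage of $U$ in $H$. Then $K$ is compact, since it is an extension of the compact group $D$ by the compact group $U$. It is open, and it is normal since $K\supseteq D\supseteq[H,H]$. Finally $H/K\simeq B/U$ is abelian and discrete, which is exactly the conclusion of the lemma.

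The main obstacle is this last claim: $B$ must contain no nontrivial vector part, so that all of $\mathbb{R}^k$ is already accounted for by the central $W$. Suppose instead that $B\simeq\mathbb{R}^{j}\times A'$ with $j>0$. I would take the preimage $P$ in $H$ of the $\mathbb{R}^j$ factor, a compact extension of $\mathbb{R}^j$. By Iwasawa's structure theory (or because $P_0$ is a connected pro-Lie group, Lemma \ref{examplesofsingroups}(ii)), the identity component $P_0$ is a product of a compact normal subgroup and a vector group. I then want to show that this vector group can be taken central, using that $G/Z(G)$ is compact: the image of any one-parameter subgroup in the compact group $G/Z(G)$ lies in a torus, and commutators with a compact group are controlled. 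If that succeeds, it yields a closed central vector subgroup of $Z(G)$ strictly larger than $W$, contradicting maximality. Making this centrality argument rigorous is the step I expect to require the most care.
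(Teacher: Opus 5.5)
Your reduction is correct as far as it goes. With $D=\overline{[G,G]}$ compact, a closed vector subgroup $W\le Z(G)$ of maximal dimension, and injectivity of vector groups among locally compact abelian groups, you do get $G\simeq W\times H$. For openness use your ``direct'' argument: $G\to G/D\to WD/D\to W$ is continuous, so the inverse of $(w,h)\mapsto wh$ is continuous. The open mapping theorem is not available, since a $\mathsf{Z}$-group, e.g.\ an uncountable discrete abelian group, need not be $\sigma$-compact. A compact open subgroup of $B\simeq H/D$ would then give $K$.

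\textbf{The gap.} The claim that $B$ has no vector part is left unproved. Equivalently, $G/\overline{[G,G]}$ and $Z(G)$ have the same vector rank. This is precisely the nontrivial content of the Grosser--Moskowitz theorem that the paper quotes. The first step of your sketch also fails as justified. Iwasawa's theorem and pro-Lie theory give only a \emph{topological} decomposition of a connected compact-by-vector group, not a splitting of groups. For instance, the reduced Heisenberg group $\mathbb{H}_1(\mathbb{R})/\{M(\mathbf{0},\mathbf{0},k)\mid k\in\mathbb{Z}\}$ has compact center $\simeq\mathbb{T}$ with quotient $\mathbb{R}^2$. Yet it is not (compact normal)$\cdot$(closed vector subgroup): a vector complement of the center would be abelian, while lifts of $(1/2,0)$ and $(0,1)$ have nontrivial commutator. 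So the $\mathsf{Z}$-hypothesis must already enter here. The heuristic about one-parameter subgroups landing in tori and ``controlled commutators'' does not supply it.

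\textbf{What is missing.} Two genuine ingredients are needed.

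\emph{Splitting of $P_0$.} By Lemma~\ref{examplesofsingroups}(iv), $G$ is a $\mathsf{T}$-group, hence $\mathsf{MAP}$. So the closed connected subgroup $P_0$ is $\mathsf{MAP}$. The Freudenthal--Weil theorem then gives $P_0\simeq\mathbb{R}^{j'}\times C$, with $C$ the maximal compact subgroup of $P_0$. Here $j'>0$: otherwise $P=P_0D$ would be compact.

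\emph{An invariant choice of the vector factor.} Since $[G,P_0]\subseteq D\cap P_0\subseteq C$, conjugation by $G$ is trivial on $P_0/C$, and it factors through the compact group $G/Z(G)$. The complements of $Z(C)_0$ in $Z(P_0)_0=\mathbb{R}^{j'}\times Z(C)_0$ are the graphs of continuous homomorphisms $\mathbb{R}^{j'}\to Z(C)_0$. They form an affine space over a weakly complete real vector space, on which $G/Z(G)$ acts continuously by affine maps. Haar averaging then yields an invariant complement $U$. Since $U$ maps isomorphically onto $P_0/C$, where $G$ acts trivially, $U$ is fixed pointwise. Thus $U$ is a nontrivial closed central vector subgroup inside $H$, contradicting the maximality of $W$.

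With these two steps added your route becomes a complete proof; without them it only reduces the lemma to its core.
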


As a consequence of Lemma \ref{theor:structTh:Z-group},  \begin{equation}\label{commutatorcalculus}
\overline{[G,G]} = \overline{[\mathbb{R}^m\times H,\mathbb{R}^m\times H]} = \overline{[\mathbb{R}^m,\mathbb{R}^m]\times[H,H]} = \overline{[H,H]},\end{equation}  
which is compact as shown in \cite[Corollary 1, p. 331]{gm1}.

\medskip

We conclude this section with some results on compact connected groups.
\begin{definition}[See \cite{hofmor}, Definition 9.5]
A compact connected group $S$ is \emph{semisimple} if $S=[S,S]$.
\end{definition}
Note that if $S$ is a compact connected group then $[S,S]$ is closed \cite[Theorem 9.2]{hofmor}, hence compact and connected; moreover $[S,S]$ is semisimple \cite[Corollary 9.6]{hofmor}. Furthermore, if $S$ is a compact connected group, then the subgroup $[S,S]\cap Z(S)_0$ is totally disconnected central and $S=[S,S]Z(S)_0$ \cite[Theorem 9.24]{hofmor}; in particular, $Z(S)$ is totally disconnected when $S$ is compact connected semisimple \cite[Theorem 9.19 (ii)]{hofmor}.
\begin{lemma}[See \cite{hofmor}, Corollary 9.20, and \cite{DikranSanchis}, Fact 2.5, Remark 2.6; Lie Components]
\label{lem:lie-components} 
Let $S$ be a compact connected semisimple group. Then there exist
an at most countable index set $J$, a family $\{L_j\}_{j\in J}$ of pairwise non-isomorphic compact connected centerfree simple Lie
groups, and a family $\{\kappa_{j}\}_{j\in J}$ of nonzero cardinals such that
\begin{equation}
\label{eq:lie-component-decomposition}
S/Z(S)\simeq \prod_{j\in J} L_{j}^{\kappa_{j}}
\end{equation}
as topological groups, where $\prod$ denotes the cartesian product and $L_{j}^{\kappa_{j}}$ the cartesian power. The groups $L_{j}$ are called the \emph{Lie components} of
$S$, and $\kappa_{j}$ is called the \emph{multiplicity} of
the Lie component $L_j$ in $S$. The family of pairs $\bigl\{([L_{j}],\kappa_{j})\bigr\}_{j\in J}$ is uniquely determined by \(S\), up to a reindexing of $J$.
\end{lemma}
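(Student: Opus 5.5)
The plan is to reduce to the structure theory of compact connected semisimple groups in \cite[Chapter 9]{hofmor}. The first step is the ``sandwich'' description: since $S$ is compact connected semisimple, there exist a family $\{S_i\}_{i\in I}$ of simply connected compact simple Lie groups and a surjective continuous homomorphism $\pi\colon \widetilde S\coloneqq\prod_{i\in I}S_i\to S$ whose kernel $D$ is central and totally disconnected \cite[Theorem 9.19]{hofmor}. I will then show that $\pi^{-1}(Z(S))=Z(\widetilde S)$. The inclusion $\supseteq$ is clear because $\pi$ is surjective. For $\subseteq$, take $x\in\pi^{-1}(Z(S))$ and consider the continuous map $y\mapsto[x,y]$ from $\widetilde S$ to $D$. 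Its domain is connected and $D$ is totally disconnected, so the map is constant, equal to $[x,1]=1$. Hence $x\in Z(\widetilde S)=\prod_i Z(S_i)$, and therefore
\[
S/Z(S)\simeq \widetilde S/Z(\widetilde S)\simeq \prod_{i\in I} S_i/Z(S_i).
\]

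Each $L_i\coloneqq S_i/Z(S_i)$ is a compact connected simple Lie group. It is centerfree by the same connectedness argument, since $Z(S_i)$ is finite. Next I group the factors according to their isomorphism type. Compact simple Lie groups are classified by their Lie algebras, so there are only countably many isomorphism classes of centerfree ones, and choosing a representative $L_j$ for each type that occurs gives an at most countable $J$. Setting $\kappa_j$ to be the number of indices $i$ with $L_i\simeq L_j$ then yields the decomposition \eqref{eq:lie-component-decomposition}.

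The main obstacle is uniqueness, which is where I expect to spend the effort. The plan is to characterize the factors intrinsically as the minimal nontrivial closed normal subgroups of $P=\prod_{i} L_i$. Let $N\neq1$ be a closed normal subgroup of $P$. Then some coordinate projection $p_i(N)$ is nontrivial, and I pick $n\in N$ with $p_i(n)\neq1$. Because $L_i$ is centerfree, there is some $\ell\in L_i$, viewed inside $P$, with $[n,\ell]\neq1$. This commutator lies in $N\cap L_i$, which is therefore a nontrivial normal subgroup of the simple group $L_i$ and hence equals $L_i$. Consequently every minimal closed normal subgroup is one of the factors $L_i$, and each factor is minimal. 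The factors are thus determined by the topological group $S/Z(S)$ itself, and so is the number $\kappa_j$ of them isomorphic to $L_j$. This gives uniqueness of $\{([L_j],\kappa_j)\}_{j\in J}$ up to reindexing, since $S/Z(S)$ depends only on $S$. A small detail to check is that ``simple'' here means algebraically simple modulo the center, so that $N\cap L_i=L_i$ is justified; this holds for centerfree compact connected simple Lie groups.
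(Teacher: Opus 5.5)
Your argument is correct and is essentially the content of the results the paper cites. The paper gives no proof of its own and simply quotes \cite[Corollary 9.20]{hofmor}, which follows from the Sandwich Theorem 9.19 exactly via your observation $\pi^{-1}(Z(S))=Z(\widetilde S)$, together with the grouping and uniqueness statement of \cite[Fact 2.5, Remark 2.6]{DikranSanchis}, for which your identification of the factors as the minimal nontrivial closed normal subgroups of $\prod_i L_i$ is a self-contained justification. Since $N\cap L_i$ is closed, you do not need abstract simplicity: it suffices that a centerfree compact connected Lie group with simple Lie algebra has no nontrivial proper closed normal subgroups, because the Lie algebra of the identity component of such a subgroup is an ideal, and a discrete normal subgroup of a connected group is central.
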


%%%%%%%%%%%%%%%%%%%%%%%%%%%%%%%%%%%%%%%%%%%%%
\section{Preliminary results on the rank and the dimension}\label{sez:prelimpgrouprankdim}
In this section $p$ always denotes a prime number.
\begin{definition}[See \cite{HHR}, Definition 2.2(ii), $p$-Groups]\label{Abelian_p_groups} For a locally compact group $G$, an element $g\in G$ is called $p$-\textit{element}, if the sequence $g^{p^{k}}$ with $k\in\mathbb{N}$ converges to the identity element in $G$. A locally compact group $G$ is called $p$-\textit{group} if 
$G$ coincides with \begin{equation} G_p\coloneqq\left\{ g\in G \mid g\text{ is  }p\text{-element}\right\}. \end{equation}
A maximal $p$-subgroup of a locally compact group $G$ is called \textit{Sylow $p$-subgroup} of $G$, or primary $p$-component of $G$. 
  \end{definition}
The groups $\mathbb{Z}_p$, $\mathbb{Q}_p$ and $\mathbb{Z}(p^\infty)$ mentioned in Example \ref{lcapg} are locally compact abelian $p$-groups. The class of locally compact $p$-groups strictly contains the class of pro-$p$ groups defined in \cite[Sec. 2.1]{rz}: a pro-$p$ group is a projective limit of finite $p$-groups (e.g. $\mathbb{Z}_p$ is a pro-$p$ group, while $\mathbb{Q}_p$ and $\mathbb{Z}(p^\infty)$ are not). Note also that 
if $G$ is a totally disconnected locally compact group then the set $G_p$ is closed by \cite[Lemma 2.6]{HHR}. 

From \cite[p. 5]{HHR}, a \textit{compact element} of $G$ is an element $g \in G$ such that $\overline{\langle g \rangle}$ is compact in $G$ and the set \begin{equation} \mathrm{comp}(G)\coloneqq \{g \in G \mid g \ \mbox{is  compact element}\} \end{equation} is described in \cite[Proposition 1.3, Lemma 1.6]{HHR}. For instance, $\mathrm{comp}(G)$ is  a subgroup of $G$ when $G$ is locally compact abelian, but in general $\mathrm{comp}(G)$ is just a subset of $G$, not necessarily a subgroup.
  
\begin{definition}[See \cite{HHR}, Definition 2, Periodic Groups]\label{plcg} 
A locally compact group $G$ is \emph{periodic} if it is totally disconnected (i.e. $G_0$ is trivial) and $\overline{\langle g \rangle}$ is compact for all $g \in G$. \end{definition}

The groups $\mathbb{Z}_p$, $\mathbb{Q}_p$ and $\mathbb{Z}(p^\infty)$ (see Example \ref{lcapg}) are periodic abelian groups. The considerations on Heisenberg groups over $\mathbb{Z}_p$ and $\mathbb{Q}_p$ in Theorem \ref{2ndmain} and in \cite{RussoWaka, RussoWakabis} show that there are  periodic nonabelian groups. Note that Definition \ref{Abelian_p_groups} of locally compact $p$-group is weaker than the one given in \cite[Definition 2.2(ii)]{HHR}, since the latter also requires the group to be periodic. However the two definitions agree within the class of locally compact abelian groups, see \cite[pp. 48--49]{HHR}.

\begin{remark}\label{Abelian_p_groups_bis} Definition \ref{Abelian_p_groups} for  a compact abelian group is equivalent to \cite[Definitions 8.7(i)]{hofmor}: a compact abelian group $G$ is called a \textit{compact $p$-group} if its  \textit{Pontryagin dual} $\widehat{G}$ is a $p$-group. A locally compact abelian group turns out to be a $p$-\textit{group} if it is a union of compact abelian $p$-groups.
\end{remark}
If $G$ is a compact abelian group, then  $\widehat{G}$ is a discrete abelian group \cite[Proposition 7.5(i)]{hofmor}, and a $p$-group may be defined up to its Pontryagin dual; the situation extends to 
locally compact abelian groups. At this point, we shall mention the delicate notion of $p$-rank.

\begin{definition}[See \cite{HHR}, Topologically Finitely Generated Groups]\label{tfgg} A locally compact group $H$ is {\em topologically finitely generated} if there exists a finite subset $Y$ of $H$ such that $H=\overline{\langle Y \rangle}$. For a  topologically finitely generated locally compact group $H$, define the \textit{minimal number of topological generators} of $H$ (also called \textit{defect of H}) by \begin{equation}\label{defect} d(H)\coloneqq\min \{|Y| \ | \ Y \subseteq H \ \mbox{and} \ \overline{\langle Y \rangle} = H\}. \end{equation} 
The  $p$-\textit{rank} of a locally compact $p$-group $G$ is \begin{equation}\label{eq:topological-p-rank} \mathrm{rank}_p(G) \coloneqq \sup \{ d(H) \ | \ H \ \mbox{topologically finitely generated closed subgroup of} \ G\}. \end{equation}
\end{definition}
As usual, we write $d(H)=\infty$ when $H$ is not
topologically finitely generated. We may rephrase Definition \ref{tfgg} saying that $\mathrm{rank}_p(G) \le r$ for some $r\in\mathbb{N}$ means that there are no topologically finitely generated closed subgroups in $G$ that can be generated by more than $r$ elements; if such $r$ does not exist, we say that $G$ does not have finite $p$-rank and write $\mathrm{rank}_p(G) =\infty$. For example, $\mathrm{rank}_p(\mathbb{Q}_p^\varepsilon)= \varepsilon$, $d(\mathbb{Q}_p^\varepsilon)= \infty$ and $\mathrm{rank}_p(\mathbb{Z}_p^\zeta) = d(\mathbb{Z}_p^\zeta)= \zeta$, for $\varepsilon,\zeta\in\mathbb{N}$. For a discrete abelian $p$-group, \eqref{eq:topological-p-rank} agrees with the usual notion  of $p$-\textit{rank} in \cite[Chapter 4, \S 4.2, p.~99]{rob}, also called \textit{Pr\"ufer} $p$-\textit{rank of an abelian group}. For a topologically finitely generated profinite group, Definition 2.4.5 in \cite{rz} agrees with \eqref{defect} here. Just to give an idea of why restrictions of structural nature arise from finiteness of the $p$-rank or from small enough sets of topological generators, we recall  that a locally compact group $G$ is \textit{compactly generated} if it has a compact subset $C$ generating $G$, that is, $\langle C \rangle=G$, see  \cite[Proposition 5.75]{hofmor}.

 \begin{proposition}[See \cite{hofmor}, Theorem 7.57(ii)]\label{CompactlyGenerated} Every compactly generated locally compact abelian group is isomorphic to the direct sum $\mathbb{R}^d \oplus \mathbb{Z}^m \oplus K$ for a compact abelian group $K$ and two nonnegative integers $d, m$. \end{proposition}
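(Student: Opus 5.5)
The plan is to reduce to the almost connected case via an open subgroup, identify that open subgroup as vector group times compact group, and then split off the vector part and the free discrete part.

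\emph{Step 1: an open almost connected subgroup.} Let $C$ be a compact generating set of $G$. Since $G/G_0$ is totally disconnected locally compact abelian, van Dantzig's theorem (Remark \ref{rem:vanDantzig}) gives a compact open subgroup $U/G_0$ of $G/G_0$. Its preimage $A$ is an open subgroup of $G$ with $A/A_0$ compact. By Lemma \ref{examplesofsingroups}(ii), $A$ is a pro-Lie group. The quotient $G/A$ is discrete. Since $C$ is compact, its image in $G/A$ is finite, so $G/A$ is a finitely generated discrete abelian group, hence $G/A\simeq\mathbb{Z}^{a}\oplus F$ with $F$ finite.

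\emph{Step 2: structure of $A$.} I claim $A\simeq\mathbb{R}^d\oplus K_1$ with $K_1$ compact. Choose $N\in\mathcal{N}(A)$ compact (possible since $A$ is pro-Lie and almost connected). Then $A/N$ is an abelian Lie group with finitely many components, hence $A/N\simeq\mathbb{R}^d\oplus\mathbb{T}^e\oplus E$ with $E$ finite. The preimage $K_1$ of $\mathbb{T}^e\oplus E$ is compact, and $A/K_1\simeq\mathbb{R}^d$. The extension $K_1\to A\to\mathbb{R}^d$ splits, because $\mathbb{R}^d$ is a projective object for quotient maps of locally compact abelian groups: lift one-parameter subgroups through the open quotient map, using that a quotient of a locally compact abelian group onto $\mathbb{R}$ admits a continuous homomorphic section. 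This gives $A\simeq\mathbb{R}^d\oplus K_1$.

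\emph{Step 3: splitting off $\mathbb{R}^d$ in $G$.} Write $W\simeq\mathbb{R}^d$ for the closed vector subgroup of $A$ found in Step 2. Since $W$ is a closed divisible subgroup isomorphic to $\mathbb{R}^d$, it is injective in the category of locally compact abelian groups. Dually, $\widehat{W}\simeq\mathbb{R}^d$ is projective, so a quotient $\widehat{G}\to\widehat{W}$ splits. Hence $G\simeq W\oplus G/W$. Now $G/W$ contains the compact open subgroup $K_1W/W\simeq K_1$, and its discrete quotient is $\mathbb{Z}^a\oplus F$. Let $K$ be the preimage of $F$ in $G/W$; since $K_1$ is compact and $F$ is finite, $K$ is compact and open. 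Then $(G/W)/K\simeq\mathbb{Z}^a$ is free. Lifting a basis gives a homomorphic section $\mathbb{Z}^a\to G/W$, whose image is discrete because $K$ is open. Thus $G/W\simeq\mathbb{Z}^a\oplus K$ topologically, and
\[
G\simeq\mathbb{R}^d\oplus\mathbb{Z}^m\oplus K,\qquad m=a .
\]

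\emph{Main obstacle.} The hard part is the two splitting statements for the vector part: the section in Step 2 and the injectivity of $\mathbb{R}^d$ in Step 3. For these I would first try Pontryagin duality, reducing both statements to the projectivity of $\mathbb{R}^d$ among locally compact abelian groups. That property holds because every continuous homomorphism $\mathbb{R}\to G/H$ lifts along the open map $G\to G/H$, via the exponential function of the pro-Lie group. The remaining steps are routine bookkeeping with open subgroups and finitely generated abelian groups.
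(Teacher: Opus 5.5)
The paper gives no argument of its own for this proposition. It simply imports it from \cite[Theorem 7.57(ii)]{hofmor}, so there is nothing internal to match, and your proposal is a correct self-contained reconstruction.

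The reduction is sound. You pass to an open almost connected subgroup $A$ via van Dantzig and Lemma \ref{examplesofsingroups}(ii), get finite generation of $G/A$ from compactness of $C$, and use Yamabe to obtain $A/K_1\simeq\mathbb{R}^d$ with $K_1$ compact. Splitting off $\mathbb{Z}^a$ by lifting a basis is also fine: the lift is discrete because it meets the open subgroup $K$ trivially, and your coset argument makes the algebraic splitting topological.

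Be aware that the two splitting facts you invoke are genuine theorems, not routine steps. Injectivity and projectivity of $\mathbb{R}^d$ among locally compact abelian groups is essentially Moskowitz's determination of the injective and projective objects of that category. Lifting one-parameter subgroups along quotient morphisms is a nontrivial result of pro-Lie theory. Leaning on them makes your proof about as self-contained as the paper's citation.

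Both can be avoided.

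\emph{Step 2.} Since $K_1$ is compact, $K_1^{\perp}\simeq\widehat{A/K_1}\simeq\mathbb{R}^d$ is an open subgroup of $\widehat{A}$. Being divisible, it has an algebraic complement $D$. This $D$ is discrete because $D\cap K_1^{\perp}=0$ and $K_1^{\perp}$ is open. Your coset argument then gives $\widehat{A}\simeq\mathbb{R}^d\times D$, hence $A\simeq\mathbb{R}^d\times\widehat{D}$ with $\widehat{D}$ compact.

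\emph{Step 3.} The injectivity of $W$ is not needed at all. Let $B$ be the preimage in $G$ of the finite subgroup $F\le G/A$. Then $B$ is open and $B_0=G_0$. Moreover $B/G_0$ is compact, since it contains the compact open subgroup $A/G_0$ with finite index. So Step 2 applies verbatim to $B$ and gives $B\simeq\mathbb{R}^d\oplus K$ with $K$ compact. Since $G/B\simeq\mathbb{Z}^a$ is free, lifting a basis exactly as in your last step yields $G\simeq B\oplus\mathbb{Z}^a$.

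This version proves the theorem using only Pontryagin duality, Yamabe's theorem and the structure of finitely generated abelian groups.
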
 
 
Their topological entropy has been described in \cite{RussoWaka, RussoWakabis}. The following result involves locally compact abelian $p$-groups of finite $p$-rank, where the notion of finite $p$-rank brings further decompositions.

\begin{lemma}[See \cite{HHR}, Theorem 3.97]\label{p-rank-finite}
For a locally compact abelian $p$-group $G$, we have  
\begin{equation}\label{eq:locompabpgrfinrank}
\mathrm{rank}_p(G)<\infty \ \ \Longleftrightarrow \ \ G \simeq \mathbb{Q}_p^\alpha \times\mathbb{Z}_p^\beta \times \mathbb{Z}(p^\infty)^\gamma \times E_p
\end{equation}
for a finite abelian $p$-group $E_p$ of $\mathrm{rank}_p(E_p)=\delta$ and for some $\alpha, \beta, \gamma, \delta\in\mathbb{N}\cup\{0\}$. In particular, \begin{equation}\mathrm{rank}_p(G)=\alpha + \beta + \gamma + \delta.\end{equation}
\end{lemma}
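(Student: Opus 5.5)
The statement is a characterization together with a rank count, so I would prove the two implications separately. I would first record two facts used throughout. \emph{(a)} A locally compact abelian $p$-group contains no copy of $\mathbb{R}$ or of $\mathbb{Z}$, since $g^{p^k}\to 1$ for every $g$. \emph{(b)} Its identity component is trivial. For (b), $G_0$ is compact connected abelian, so by Remark~\ref{Abelian_p_groups_bis} its dual is at once torsion-free and a $p$-group, hence zero. So $G$ is totally disconnected, and by Remark~\ref{rem:vanDantzig} it has a compact open subgroup.

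\textbf{($\Leftarrow$) and the rank formula.} Let $G=\mathbb{Q}_p^\alpha\times\mathbb{Z}_p^\beta\times\mathbb{Z}(p^\infty)^\gamma\times E_p$ and let $H\le G$ be a closed topologically finitely generated subgroup.
\begin{itemize}
\item By (a) and Proposition~\ref{CompactlyGenerated}, $H$ is compact.
\item Its projection to $\mathbb{Q}_p^\alpha$ is compact, so it lies in $p^{-k}\mathbb{Z}_p^\alpha\simeq\mathbb{Z}_p^\alpha$ for some $k$. Its projection to $\mathbb{Z}(p^\infty)^\gamma$ is compact in a discrete group, hence finite, so it lies in $(\mathbb{Z}/p^k)^\gamma$.
\item Hence $H\le C\coloneqq \mathbb{Z}_p^{\alpha+\beta}\times(\mathbb{Z}/p^k)^\gamma\times E_p$, which is a finitely generated $\mathbb{Z}_p$-module.
\item Over the PID $\mathbb{Z}_p$, a submodule of a finitely generated module needs no more generators than the module. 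For pro-$p$ abelian groups, topological generation agrees with $\mathbb{Z}_p$-module generation, with $d(C)=\dim_{\mathbb{F}_p}C/pC$. Taking $k\ge1$, this gives $d(H)\le d(C)=\alpha+\beta+\gamma+\delta$.
\end{itemize}
Equality holds because $C$ itself, with $k\ge 1$, is a closed subgroup of $G$. This proves both finiteness and $\mathrm{rank}_p(G)=\alpha+\beta+\gamma+\delta$.

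\textbf{($\Rightarrow$).} Assume $\mathrm{rank}_p(G)=r<\infty$ and fix a compact open subgroup $U$, which exists by (b).
\begin{itemize}
\item $U$ is a pro-$p$ abelian group. If $\dim U/pU>r$, pick $r+1$ elements whose images in $U/pU$ are independent. They generate a closed subgroup $H$ that surjects onto an $(r+1)$-dimensional elementary abelian quotient, so $d(H)>r$, a contradiction.
\item Hence $U$ is topologically finitely generated, and $U\simeq\mathbb{Z}_p^a\times F$ with $F$ finite.
\item The discrete quotient $G/U$ is an abelian $p$-group in which every finite subgroup lifts to a compact subgroup of bounded rank. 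So $G/U$ has finite Prüfer rank, and by the classical structure theorem in \cite{rob} it is $\mathbb{Z}(p^\infty)^c\times$finite.
\end{itemize}

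\textbf{Main obstacle: reassembling the extension $U\hookrightarrow G\twoheadrightarrow G/U$.} Here a non-split $\mathbb{Z}_p$-by-$\mathbb{Z}(p^\infty)$ extension must be recognized as $\mathbb{Q}_p$. I would first try the following steps.
\begin{enumerate}
\item Let $D$ be the closure of the maximal divisible subgroup of $G$.
\item Show $D$ is a divisible LCA $p$-group of finite rank. Its compact open subgroup is then $\mathbb{Z}_p^a$, torsion-free modulo a finite part. Since $D$ has no torsion beyond $\mathbb{Z}(p^\infty)^\gamma$, I would argue that $D$ is a $\mathbb{Q}_p$-vector space, with a topology making it $\mathbb{Q}_p^\alpha$, plus a discrete divisible torsion part $\mathbb{Z}(p^\infty)^\gamma$.
\item Use divisibility (injectivity in the category of LCA groups, via Pontryagin duality and open-mapping arguments) to split $G=D\times G/D$.
\item Observe that $G/D$ is reduced of finite rank. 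By the same analysis it has a compact open $\mathbb{Z}_p^\beta\times$finite subgroup with finite quotient, which forces $G/D\simeq\mathbb{Z}_p^\beta\times E_p$.
\end{enumerate}
If the splitting step resists, the fallback is to dualize and run the same argument on $\widehat G$, which is again an LCA $p$-group of finite rank. In the dual, compact and discrete factors exchange roles, $\mathbb{Q}_p$ is self-dual, and $\mathbb{Z}_p\leftrightarrow\mathbb{Z}(p^\infty)$.
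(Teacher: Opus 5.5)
You should know at the outset that the paper gives no proof of this lemma; it imports it from Theorem 3.97 of \cite{HHR}. So I am judging your argument on its own. Your $(\Leftarrow)$ direction and the rank count are correct and self-contained. So is the first half of $(\Rightarrow)$: $G$ is totally disconnected, a compact open $U$ is $\mathbb{Z}_p^a\times F$, and $G/U\simeq\mathbb{Z}(p^\infty)^c\times(\text{finite})$.

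\textbf{The gap.} The reassembly of $U\hookrightarrow G\twoheadrightarrow G/U$ is the real content of $(\Rightarrow)$, and you only announce it; as written it does not go through.
\begin{itemize}
\item \emph{Step 3 rests on a false principle.} Divisible LCA groups are not injective in LCA: $1\mapsto 1$ from $\mathbb{Z}\le\mathbb{R}$ into $\mathbb{Q}_p$ has no continuous extension. They are not injective even among LCA $p$-groups. The inclusion $\mathbb{Z}_p\hookrightarrow\mathbb{Q}_p$ does not extend along the diagonal closed embedding $\mathbb{Z}_p\hookrightarrow\prod_{n\ge1}\mathbb{Z}/p^n$, because the dense torsion subgroup $\bigoplus_n\mathbb{Z}/p^n$ must go to $0$ in $\mathbb{Q}_p$. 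The associated pushout is an LCA $p$-group in which the closed divisible subgroup $\mathbb{Q}_p$ is not a topological direct factor. So the splitting must use finite rank, and you never say how.
\item \emph{Step 4 presupposes the obstacle you named.} It assumes that a group $P$ with a compact open subgroup $V$ and $P/V\simeq\mathbb{Z}(p^\infty)$ must contain a nonzero divisible subgroup. Only then does algebraic reducedness of $G/D$ force $c=0$ there. Nothing in steps 1--3 gives this.
\item \emph{Step 2 is only asserted.} You do not prove divisibility and closedness of $\overline{D}$, discreteness of $t(D)$, or the topological identification with $\mathbb{Q}_p^\alpha\times\mathbb{Z}(p^\infty)^\gamma$.
\item \emph{The dual fallback is circular.} You would first have to show that $\widehat G$ is a $p$-group of finite rank. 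Moreover, dualizing $U\hookrightarrow G\twoheadrightarrow G/U$ produces an extension of exactly the same compact-open-by-discrete shape.
\end{itemize}

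\textbf{The missing idea} is a compactness (inverse-limit) argument.
\begin{itemize}
\item \emph{Divisible lifts.} Let $q_k\in G/U$ have order $p^k$ with $pq_{k+1}=q_k$ and $q_0=0$. The cosets $q_k\subseteq G$ are nonempty compact sets, and $x\mapsto px$ maps $q_{k+1}$ into $q_k$. The inverse limit is therefore nonempty, giving $y_k\in q_k$ with $py_{k+1}=y_k$. Then $W=\bigcup_k\overline{\langle y_k\rangle}$ is $p$-divisible, hence divisible. It meets $U$ in the compact group $\overline{\langle y_0\rangle}$, so it is closed, and it maps onto the given copy of $\mathbb{Z}(p^\infty)$. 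Doing this for each Pr\"ufer factor gives $G=D+V$, with $D$ the maximal divisible subgroup and $V$ compact open.
\item \emph{$D$ is closed.} Finite rank makes $pU'$ open in $U'$ for every compact open subgroup $U'$ of a closed subgroup. Hence $p\overline{D}$ is open, thus closed, in $\overline{D}$, so $\overline{D}$ is divisible and $D$ is closed. It follows that $G/D$ is a topologically finitely generated pro-$p$ group $\mathbb{Z}_p^\beta\times E_p$.
\item \emph{Splitting off $G/D$.} Lift the $\mathbb{Z}_p$-generators arbitrarily. Correct the lifts of the finite-order generators by elements of $D$, using divisibility, so that they keep their orders. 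Extend $\mathbb{Z}_p$-linearly and apply the open mapping theorem; $G$ is $\sigma$-compact because $G/U$ is countable.
\item \emph{Splitting $D$.} Apply the same trick to the finite fibres $\{x\in D: p^kx=u\}$, which are cosets of the finite group $D[p^k]$. Let $u$ run over a $\mathbb{Z}_p$-basis of a torsion-free compact open subgroup of $D$. This yields an open copy of $\mathbb{Q}_p^\alpha$ in $D$ complementing the discrete torsion subgroup $t(D)\simeq\mathbb{Z}(p^\infty)^\gamma$.
\end{itemize}
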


Also for these groups the finiteness of the topological entropy has been studied.

\begin{lemma}[See \cite{daf, RussoWaka}, Small Entropy vs Small $p$-rank] \label{prankfin-entropy}
If $G$ is a locally compact abelian $p$-group with finite $p$-rank, then $G \in \mathfrak{E}_{<\infty}$, and $G\in \mathfrak{E}_0$ if and only if $\alpha=0$ in  \eqref{eq:locompabpgrfinrank}.
\end{lemma}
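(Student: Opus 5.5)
The plan is to use the structure theorem, Lemma~\ref{p-rank-finite}, and then reduce to the building blocks $\mathbb{Q}_p$, $\mathbb{Z}_p$, $\mathbb{Z}(p^\infty)$ and finite groups. Their entropies are known from Examples~\ref{ex:RQpentropiafin}, \ref{lcapg} and \ref{ex:Zpentropia0}, and from the Remark that discrete groups lie in $\mathfrak{E}_0$. So we write
\[
G\simeq \mathbb{Q}_p^\alpha\times\mathbb{Z}_p^\beta\times\mathbb{Z}(p^\infty)^\gamma\times E_p .
\]
The first step is to isolate subgroups that every $\varphi\in\mathrm{End}(G)$ must respect. The maximal divisible subgroup $D\simeq\mathbb{Q}_p^\alpha\times\mathbb{Z}(p^\infty)^\gamma$ is fully invariant, since a homomorphic image of a divisible group is divisible. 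It is closed, and $G/D\simeq\mathbb{Z}_p^\beta\times E_p$ is compact. Inside $D$, the torsion subgroup $T\simeq\mathbb{Z}(p^\infty)^\gamma$ is fully invariant in $D$. It is discrete, because it meets the compact open subgroup $\mathbb{Z}_p^\alpha\times\{0\}$ of $D$ trivially, and $D/T\simeq\mathbb{Q}_p^\alpha$. This gives a fully invariant series $T\le D\le G$ whose factors are discrete, $\mathbb{Q}_p^\alpha$, and the compact group $\mathbb{Z}_p^\beta\times E_p$. These lie respectively in $\mathfrak{E}_0$, in $\mathfrak{E}_{<\infty}$ by Yuzvinski's formula, and in $\mathfrak{E}_0$ by Example~\ref{ex:Zpentropia0} together with Lemma~\ref{teorspezzamento}(iv), since $p^k\mathbb{Z}_p^\beta\times\{0\}$ is fully invariant for large $k$.

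The second step is to glue these bounds. The goal is the subadditivity inequality
\[
\mathsf{h}_{\mathsf{top}}(\varphi)\le \mathsf{h}_{\mathsf{top}}(\varphi|_N)+\mathsf{h}_{\mathsf{top}}(\varphi_{G/N})
\]
for $N=D$, and then for $N=T$ inside $D$. The addition theorem of Lemma~\ref{teorspezzamento}(iii) needs $\varphi(N)=N$ and $\ker\varphi\subseteq N$, which fail in general. Instead I would prove the inequality directly for totally disconnected locally compact abelian groups, using Remark~\ref{rem:vanDantzig}. Fix $U\in\mathcal{B}(G)$, put $U_N\coloneqq U\cap N$, and let $\pi(U)$ be the image of $U$ in $G/N$. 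The key estimate is
\[
[U:C_n(\varphi,U)]\le [U_N:C_n(\varphi|_N,U_N)]\cdot[\pi(U):C_n(\varphi_{G/N},\pi(U))],
\]
which follows because $C_n(\varphi,U)\cap N=C_n(\varphi|_N,U_N)$ and $\pi(C_n(\varphi,U))\subseteq C_n(\varphi_{G/N},\pi(U))$. The delicate point is that the reverse containment for the image under $\pi$ is not needed, only the index inequality. Taking logarithms, dividing by $n$ and passing to suprema gives the inequality, so every $\varphi$ has finite entropy and $G\in\mathfrak{E}_{<\infty}$.

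The third step is the characterization of $\mathfrak{E}_0$. If $\alpha=0$, all three factors lie in $\mathfrak{E}_0$, so the inequality gives $\mathsf{h}_{\mathsf{top}}(\varphi)=0$ for every $\varphi$. If $\alpha>0$, take $\varphi$ to be multiplication by $p^{-1}$ on one factor $\mathbb{Q}_p$ and the identity on all other factors. This is a continuous automorphism. By Lemma~\ref{teorspezzamento}(i) applied to the invariant closed subgroup $\mathbb{Q}_p$ and Yuzvinski's formula, $\mathsf{h}_{\mathsf{top}}(\varphi)\ge\log p>0$, so $G\notin\mathfrak{E}_0$.

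I expect the main obstacle to be the subadditivity inequality in the second step. One has to check carefully that the index computation survives the facts that $\pi(U)$ need not equal the compact open subgroup one would like in $G/N$, and that the $\limsup$ is compatible with products. If this becomes messy, the fallback is to cite the general addition theorem for totally disconnected locally compact abelian groups from \cite{GBV} or \cite{daf}, which holds there without the hypotheses of Lemma~\ref{teorspezzamento}(iii).
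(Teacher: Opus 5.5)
The paper does not prove this lemma; it quotes it from \cite{daf, RussoWaka}. So your argument has to stand on its own, and it breaks at the key estimate of your second step. Your first step and your $\alpha>0$ example are fine.

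\textbf{The gap.} Since $C=C_n(\varphi,U)$ is a subgroup of $U$, one has exactly $[U:C]=[U_N:C\cap N]\cdot[\pi(U):\pi(C)]$, with $C\cap N=C_n(\varphi|_N,U_N)$. The containment $\pi(C)\subseteq C_n(\varphi_{G/N},\pi(U))$ then gives $[\pi(U):\pi(C)]\ge[\pi(U):C_n(\varphi_{G/N},\pi(U))]$. That is the \emph{opposite} of the inequality you state. What your containments prove is the easy half $\mathsf{h}_{\mathsf{top}}(\varphi)\ge\mathsf{h}_{\mathsf{top}}(\varphi|_N)+\mathsf{h}_{\mathsf{top}}(\varphi_{G/N})$. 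The upper bound needs precisely the reverse containment you call unnecessary.

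The reverse containment genuinely fails in your setting. Take $G=\mathbb{Q}_p\times\mathbb{Z}_p$, $N=D=\mathbb{Q}_p\times\{0\}$, $\varphi(a,b)=(p^{-1}b,b)$, $U=\mathbb{Z}_p\times\mathbb{Z}_p$ and $n=2$. Then $C_2(\varphi,U)=\mathbb{Z}_p\times p\mathbb{Z}_p$, so the left side of your estimate is $p$. Both factors on the right equal $1$, because $\varphi|_D=0$ and $\varphi_{G/D}=\mathrm{id}$.

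Controlling the defect $[C_n(\varphi_{G/N},\pi(U)):\pi(C_n(\varphi,U))]$ is the whole content of an addition theorem. \cite{GBV} does this only under $\varphi(N)=N$ and $\ker\varphi\subseteq N$, which is why Lemma~\ref{teorspezzamento}(iii) carries those hypotheses. Here $\varphi(D)=0\neq D$, so your fallback to an unconditional version is not supported by what the paper records. As it stands, only your $\alpha>0$ direction is proved. Both $G\in\mathfrak{E}_{<\infty}$ and ``$\alpha=0\Rightarrow G\in\mathfrak{E}_0$'' rest on the missing inequality.

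\textbf{A repair.} You can close the gap with your own ingredients by gluing only across a \emph{discrete} fully invariant subgroup, where the reverse containment does hold.
\begin{enumerate}
\item Take $N=t(G)=\mathbb{Z}(p^\infty)^\gamma\times E_p$. It is discrete, since it meets $\mathbb{Z}_p^{\alpha+\beta}\times\{0\}$ trivially.
\item Choose a compact open $U'$ with $U'\cap N=0$, and compact open subgroups $U\le U'\cap\varphi^{-1}(U')$. Such $U$ form a base at the identity.
\item If $x\in U$ and $\varphi^i(x)\in U+N$ for $i<n$, induction gives $\varphi^i(x)\in\varphi(U)\cap(U+N)\subseteq U'\cap(U+N)=U$.
\item Hence $\pi(C_n(\varphi,U))=C_n(\varphi_{G/N},\pi(U))$, and $\pi|_U$ is injective. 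Therefore $\mathsf{h}_{\mathsf{top}}(\varphi)\le\mathsf{h}_{\mathsf{top}}(\varphi_{G/N})$.
\item On $G/N\simeq\mathbb{Q}_p^\alpha\times\mathbb{Z}_p^\beta$, every continuous endomorphism is $\mathbb{Z}_p$-linear. It extends to a $\mathbb{Q}_p$-linear map $\Phi$ of $\mathbb{Q}_p^{\alpha+\beta}$ that leaves this open subgroup invariant. Hence its entropy equals $\mathsf{h}_{\mathsf{top}}(\Phi)$, which is finite by Yuzvinski's formula.
\item If $\alpha=0$, then $G/N\simeq\mathbb{Z}_p^\beta\in\mathfrak{E}_0$ by Example~\ref{ex:Zpentropia0}.
\end{enumerate}
Together with your correct $\alpha>0$ example, this yields the lemma.
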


To prove these results, and in other calculations, the main idea is to look for decompositions of the endomorphisms in portions of the group where we can control the finiteness of the topological entropy, so to be able to apply some Addition Theorems (see, e.g., Lemma \ref{teorspezzamento}).

\medskip

Here we shall also mention the notion of \textit{(topological) dimension}, that we will use for locally compact groups. For a locally compact Hausdorff space $X$, we denote by $\dim(X)$ the Lebesgue covering dimension of $X$. A locally compact group is totally disconnected if and only if it is zero-dimensional \cite[Exercise E8.6]{hofmor}. Hofmann and Morris place the notion of dimension in a broader axiomatic framework \cite[Scholium~8.25]{hofmor}, and show that this agrees with the standard notions of topological dimension on locally compact abelian groups.

\begin{remark}[See \cite{hofmor}, Dimension of Locally Compact Groups] \label{dimensions} Consider the following function 
\begin{equation}\mathrm{DIM} \colon X \in \mathcal{C} \longmapsto \mathrm{DIM}(X) \in \{0\}\cup\mathbb{N} \cup \{+\infty\}\end{equation} defined on the class $\mathcal{C}$ of all locally compact topological spaces. Assume that the following conditions are satisfied:
\begin{itemize}
    \item [{\rm (Da).}] If $f : X\in \mathcal{C} \to Y\in \mathcal{C}$ is a covering map, then $\mathrm{DIM} (X) = \mathrm{DIM} (Y)$.
    \item [{\rm (Db)}.] If $X=\mathbb{R}^n$ or $X=[0,1]^n$ for $n \in \mathbb{N}$, then $ \mathrm{DIM} (X)=n$.
    \item [{\rm (Dc)}.] For every paracompact space $Y \in \mathcal{C}$ and each closed subspace $X$ of $Y$ we have $\mathrm{DIM} (X) \le \mathrm{DIM} (Y)$.
    \item [{\rm (Dd)}.] Assume that $X$ is the underlying space of a compact group whose topology has a basis of compact open sets and assume that $Y=\mathbb{R}^n$. Then  $\mathrm{DIM} (X \times Y) \le n$.
    \end{itemize}
Then for every compact abelian group $G$ we have that 
    \begin{equation}\mathrm{DIM}(G)=\mathrm{dim}(G)=\mathrm{dim}_\mathbb{Q}(\mathbb{Q} \otimes_{\mathbb{Z}} \widehat{G}),\end{equation}
where  $\mathrm{dim}_\mathbb{Q}(\mathbb{Q} \otimes_{\mathbb{Z}} \widehat{G})$ denotes the dimension as $\mathbb{Q}$-module of the tensor product  $\mathbb{Q} \otimes_{\mathbb{Z}} \widehat{G}$.\\
For any locally compact abelian group $G=\mathbb{R}^n \times H$ with $H$ compact abelian group, 
\begin{equation}\label{observationeasy} \mathrm{DIM}(G)=n + \mathrm{dim}(\mathrm{comp}(G)_0).\end{equation}
\end{remark}

On this basis, we may recall a relevant result which describes the finiteness of the topological entropy in case of locally compact abelian groups. 

\begin{proposition}[See \cite{daf}, Theorem 1.1; see also \cite{DikranSanchis, GBV, LW, RussoWaka}, Influence  of the Finite Dimension on the Topological Entropy]\label{origin}
Let $G$ be a locally compact abelian group. 
\begin{itemize}
\item[{\rm (i)}.] If $G\in\mathfrak{E}_{<\infty}$, then $\mathrm{dim}(G) < +\infty$.  
\item[{\rm (ii)}.] The converse of {\rm (i)} above is true when $G$ is compact and $G/G_0\in \mathfrak{E}_{<\infty}$.
\item[{\rm (iii)}.]  If $G\in \mathfrak{E}_0$, then  $\mathrm{dim}(G) =0$.
\item[{\rm (iv)}.] If $G$ is profinite, then $G\in \mathfrak{E}_0$ if and only if $G\in \mathfrak{E}_{<\infty}$.
\end{itemize}
\end{proposition}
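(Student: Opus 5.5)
The plan is to move everything to the discrete side by Pontryagin duality, and to use the Bridge Theorem of Dikranjan and Giordano Bruno: for $\varphi\in\mathrm{End}(G)$ with $G$ locally compact abelian, $\mathsf{h}_{\mathsf{top}}(\varphi)$ equals the algebraic entropy of the dual endomorphism $\widehat{\varphi}$ of $\widehat{G}$. It suffices to take this in the compact/discrete case, after splitting off the vector part. By the structure theorem for locally compact abelian groups, $G\simeq \mathbb{R}^n\times G_1$, where $G_1$ has a compact open subgroup $K$. Since $\mathbb{R}^n$ is characteristic up to the choice of complement, endomorphisms of the form $\alpha\times\mathrm{id}_{G_1}$ are always available. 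Moreover $\mathrm{dim}(G)=n+\mathrm{dim}(K)=n+\mathrm{dim}(K_0)$ by \eqref{observationeasy}.

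\textbf{Step (iii).} If $n>0$, the endomorphism $2\cdot\mathrm{id}_{\mathbb{R}^n}\times\mathrm{id}_{G_1}$ has entropy $n\log 2>0$ by Yuzvinski's formula \eqref{eq:entropRlog} and Lemma \ref{teorspezzamento}(i). If $n=0$ but $K_0\neq 1$, I would look for a circle or solenoid quotient of an open compactly generated subgroup. I would then realize an endomorphism of positive entropy on it, for instance a multiplication by $2$ on the connected part. Its entropy is computed via the Addition Theorem, Lemma \ref{teorspezzamento}(ii), applied to the fully invariant subgroup $K_0$ of a compact open piece. This gives $G\notin\mathfrak{E}_0$ whenever $\mathrm{dim}(G)>0$.

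\textbf{Step (i).} The argument for (i) is parallel. If $\mathrm{dim}(G)=\infty$, then $\widehat{K_0}$ is torsion-free of infinite rank. I would extract a free abelian subgroup $\mathbb{Z}^{(\mathbb{N})}$, or dually a quotient $\mathbb{T}^{\mathbb{N}}$ of $K_0$. On it the Bernoulli-type map $x\mapsto 2x$ composed with a shift has infinite entropy, compare Example \ref{ex:Bernoulli}. The main obstacle of the whole proof is exactly here: endomorphisms of a subgroup or quotient need not extend to $G$. I would first try to choose the free subgroup of $\widehat{G}$ as a direct summand after passing to a suitable pure subgroup. Failing that, I would use the divisible hull and the freedom of defining endomorphisms on a basis to build $\widehat{\varphi}$ with infinite algebraic entropy directly on $\widehat{G}$.

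\textbf{Step (ii).} Here $G$ is compact and $\mathrm{dim}(G)<\infty$. I would apply Lemma \ref{teorspezzamento}(iv) with the fully invariant subgroup $N=G_0$; since $G/G_0\in\mathfrak{E}_{<\infty}$ by hypothesis, it remains to show $G_0\in\mathfrak{E}_{<\infty}$. The dual $\widehat{G_0}$ is torsion-free of finite rank $d=\mathrm{dim}(G)$. Hence every endomorphism of $G_0$ is dual to an endomorphism of a subgroup of $\mathbb{Q}^d$, which extends to a rational $d\times d$ matrix. Yuzvinski's formula (Example \ref{ex:RQpentropiafin}) then gives finite entropy, namely a sum of $\log$'s of the archimedean and $p$-adic absolute values of its eigenvalues, plus the log of the leading coefficient.

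\textbf{Step (iv).} For $G$ profinite, $\widehat{G}$ is torsion, and one inclusion is trivial. For the other, suppose some $\varphi$ has $\mathsf{h}_{\mathsf{top}}(\varphi)>0$. By Remark \ref{rem:vanDantzig} there is a compact open subgroup $U$ with $[U:C_n(\varphi,U)]$ growing exponentially. From this I would extract a finite subgroup $F$ of a quotient of $G$ on which $\varphi$ acts like a left Bernoulli shift on $F^{\mathbb{N}}$. Then, taking a product $\prod_k F^{\mathbb{N}}$ realized inside $G$, I would assemble shifts with growing blocks to get an endomorphism of infinite entropy. The extension issue reappears here as well, and I would again handle it on the dual side, where torsion groups decompose into primary components that are fully invariant.
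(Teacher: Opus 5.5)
The paper gives no proof of this proposition; it is quoted from the literature. So your outline has to stand on its own.

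Steps (ii) and (iii) are sound in outline. In (ii), however, you need Yuzvinski's formula for the solenoid $\widehat{\mathbb{Q}}^d$, not Example~\ref{ex:RQpentropiafin}. Since $\widehat{G_0}\le\mathbb{Q}^d$, the group $G_0$ is a quotient of $\widehat{\mathbb{Q}^d}$ by a closed subgroup invariant under the dual of the rational matrix. Lemma~\ref{teorspezzamento}(i) then bounds $\mathsf{h}_{\mathsf{top}}(\varphi)$ by $\log s+\sum_{|\lambda_i|>1}\log|\lambda_i|$. That expression is equivalent to the sum over all places; you should use one of the two forms, not add them together. Steps (i) and (iv), by contrast, are not actually proved.

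\textbf{Step (i).} The extension obstacle you single out does not arise here, and chasing it leaves (i) open. Take $\mu_2\colon x\mapsto 2x$ on all of $G$. It is a continuous endomorphism, every closed subgroup is $\mu_2$-invariant, and it induces multiplication by $2$ on every quotient. If $\mathrm{dim}(G)=\infty$, then $\widehat{K_0}$ has infinite torsion-free rank. So for every $m$ there is a closed $N_m\le K_0$ with $K_0/N_m\simeq\mathbb{T}^m$, dual to a copy of $\mathbb{Z}^m$ in $\widehat{K_0}$. Apply Lemma~\ref{teorspezzamento}(i) to $K\le G$, to $K_0\le K$, and to the quotient $K_0\to K_0/N_m$. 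This gives $\mathsf{h}_{\mathsf{top}}(\mu_2)\ge m\log 2$ for every $m$, hence $\mathsf{h}_{\mathsf{top}}(\mu_2)=\infty$. No shift, direct summand or divisible hull is needed. This is exactly your argument for (iii), read with $\mu_2$ defined on all of $G$ rather than on a quotient. Note also that "defining endomorphisms on a basis" of $\widehat G$ is not available: $\widehat G$ need not be free, and endomorphisms of its divisible hull need not restrict to it.

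\textbf{Step (iv).} Here there is a genuine gap. You assume that one $\varphi$ of positive entropy lets you place infinitely many independent Bernoulli blocks inside $G$ and shift them by an endomorphism of $G$. Neither step is justified. The primary decomposition does not help, because the difficulty sits inside a single component $A_p$ of $A=\widehat G$. The missing idea is a case distinction on the Ulm--Kaplansky invariants $f_k(A_p)=\dim_{\mathbb{F}_p}(p^kA_p)[p]/(p^{k+1}A_p)[p]$.

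(a) Suppose some $f_k(A_p)$ is infinite. The corresponding homogeneous component $\mathbb{Z}(p^{k+1})^{(f_k)}$ of a basic subgroup is bounded and pure, hence a direct summand of $A$. A shift with infinite blocks on it, extended by $0$, has infinite entropy; dually, this is a Bernoulli shift on a direct factor of $G$ as in Example~\ref{ex:Bernoulli}.

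(b) Suppose all $f_k(A_p)$ are finite. Then every bounded summand is finite, so there is nothing to shift. Instead you must show that every endomorphism $\psi$ of $A$ already has entropy $0$ or $\infty$.
\begin{itemize}
\item Multiplication by $p^{k-1}$ is a $\psi$-equivariant isomorphism $A_p[p^k]/A_p[p^{k-1}]\simeq(p^{k-1}A_p)[p]$.
\item The subgroup $(p^{k-1}A_p)[p]$ is $\psi$-invariant and has finite index $p^{f_0+\dots+f_{k-2}}$ in $A_p[p]$.
\item By the Addition Theorem (Lemma~\ref{teorspezzamento}(ii), transported by duality), every layer therefore contributes exactly $\mathrm{ent}(\psi|_{A_p[p]})$.
\item Hence $\mathrm{ent}(\psi|_{A_p})=\sum_{k\ge1}\mathrm{ent}(\psi|_{A_p[p]})\in\{0,\infty\}$, and summing over $p$ gives $\mathrm{ent}(\psi)\in\{0,\infty\}$.
\end{itemize}
This dichotomy for torsion groups is exactly what (iv) rests on, and your plan contains neither half of it.
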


%%%%%%%%%%%%%%%%%%%%%%%%%%%%%%%%%%%%%%%%%%%%%%%%%%%%%%%%%%
\section{The first main result and its proof}\label{sez:ourTHEOR1.1}

Several extensions of Schur's Theorem have been investigated in the literature, both in the setting of abstract groups and in topological group theory, see \cite{hofmann1, leptin1, leptin2, niroomand, podoski}. 
We now prove the main result of this section.

\begin{proof}[Proof of Theorem \ref{1stmain}]  Let $G=G_0$ be a $\mathsf{Z}$-group such that $G/Z(G)\in \mathfrak{E}_{<\infty}$. By \cite[Corollary~1, p.~331]{gm1}, or equivalently from Lemma \ref{examplesofsingroups}(iv), $\overline{[G,G]}$ is a compact group. By Lemma \ref{theor:structTh:Z-group}, we have that $G \simeq \mathbb{R}^m \times H$ for some $m \ge 0$ and for some subgroup $H$ containing a compact open normal subgroup $K$ such that $H/K$ is discrete abelian. Since $G$ is connected, so is $H$, with $K$ open in $H$, hence $H=K$. Therefore $G \simeq \mathbb{R}^m \times K$ where $K$ is compact connected and we conclude from \eqref{commutatorcalculus} that $\overline{[G,G]}= \overline{[H,H]}= \overline{[K,K]}$. We have $\overline{[K,K]}=[K,K]$ compact connected and semisimple by \cite[Corollary 9.6]{hofmor}, with
\begin{equation} \label{eq:truefirsteq}
\frac{G}{Z(G)} = \frac{\mathbb{R}^m \times K}{Z(\mathbb{R}^m \times K)} \simeq  \frac{\mathbb{R}^m }{Z(\mathbb{R}^m)} \times \frac{K}{Z(K)} \simeq   \frac{K}{Z(K)} \in \mathfrak{E}_{<\infty}.\end{equation}
By \cite[Theorem 9.24]{hofmor}, since $Z(K)_0\leq Z(K)$, we deduce that $K=[K,K]Z(K)$. Therefore 
\begin{equation}\frac{K}{Z(K)}\simeq \frac{[K,K]}{[K,K]\cap Z(K)}.\end{equation} Moreover $[K,K]\cap Z(K) = Z([K,K])$, thus 
\begin{equation}
\frac{K}{Z(K)}\simeq \frac{[K,K]}{Z([K,K])}\in \mathfrak{E}_{<\infty}.
\end{equation}

\medskip

\textbf{Claim 1}.  If $S$ is a compact connected semisimple group with $S/Z(S)\in \mathfrak{E}_{<\infty}$, then $S\in \mathfrak{E}_{<\infty}$.

\medskip

In fact, by Lemma \ref{lem:lie-components}, $S/Z(S)$ is, up to topological isomorphism, the cartesian product $\prod_{j \in J}L_j^{\kappa_j}$ of the so-called Lie components $L_j$ each appearing with a certain multiplicity $\kappa_j$. If $S/Z(S)\in\mathfrak{E}_{<\infty}$ then every such $L_j$ has finite multiplicity: otherwise $S/Z(S)$ would have a direct factor isomorphic to an infinite power $L^{\mathbb{N}}$ of an infinite compact group $L$, on which the Bernoulli shift $\varphi_L$ has $\mathsf{h}_{\mathsf{top}}(\varphi_L)= +\infty$ (see Example \ref{ex:Bernoulli}). This goes in contradiction with $S/Z(S)\in\mathfrak{E}_{<\infty}$, because extending $\varphi_L$ by the identity on the complementary direct factor yields a continuous endomorphism of $S/Z(S)$ with infinite topological entropy. Moreover, $Z(S)$ is zero-dimensional, because it is totally disconnected by \cite[Theorem 9.19 (ii)]{hofmor}. By \cite[Theorem 6.9]{DikranSanchis}, every quotient subgroup of $S$ belongs to $\mathfrak{E}_{<\infty}$. Claim 1 follows.

\medskip

Consider $S=[K,K]$, applying  Claim 1, which shows that $[K,K]=\overline{[G,G]}\in\mathfrak{E}_{<\infty}$. The first part of the result is proved.

\medskip

\textbf{Claim 2.} There is a centerfree profinite group $H$ with $H/Z(H) \in \mathfrak{E}_{<\infty}$ and $\overline{[H,H]}\not\in \mathfrak{E}_{<\infty}$.

\medskip

Assume $G_0$ is properly contained in $G$. We construct a counterexample with a centerfree profinite group $H$ (where $H_0=1$). Let $p_1,p_2,\ldots$ be a sequence of pairwise distinct odd primes and, for every $n \in \mathbb{N}$, let $d_n$ be the minimal positive integer such that $p_n \mid 2^{d_n}-1$, hence $\gcd(d_n, p_n)=1$. Consider the finite field $\mathbb{F}_{2^{d_n}}$ of order $2^{d_n}$; then, its multiplicative group $\mathbb{F}_{2^{d_n}}^\times$ (cyclic of order $2^{d_n}-1$) contains an element $\lambda_n$ of order $p_n$. Set the finite discrete abelian groups
\begin{equation}
    A_n\coloneqq (\mathbb{F}_{2^{d_n}},+)\simeq\mathbb{Z}(2)^{d_n} = \underbrace{\mathbb{Z}(2) \times \ldots \times \mathbb{Z}(2)}_{d_n\mbox{-times}}  \quad \mbox{and} \quad B_n \coloneqq \langle t_n \rangle \simeq \mathbb{Z}(p_n),
\end{equation}
and define the semidirect product group 
\begin{equation}
D_n\coloneqq A_n \rtimes B_n,
\end{equation}
by the action of $B_n$ on $A_n$ given by
\begin{equation}
t_n a_nt_n^{-1}\coloneqq \lambda_n a_n,
\end{equation}for all $a_n\in A_n$. We have $D_n/A_n\simeq B_n$, and $[D_n,D_n]=[A_n,B_n]$ where $[a_n,t_n]=(\lambda_n^{-1}-1)a_n$ for all $a_n\in A_n$, and since $\lambda_n^{-1}-1\neq0$ we get 
\begin{equation}
[D_n, D_n]=A_n.    
\end{equation}
It can be checked that $D_n$ is centerfree:
\begin{equation}
Z(D_n)=1,
\end{equation}
in fact, if $z_n=a_nt_n^k\in Z(D_n)$, centrality of $z_n$ with respect to $A_n$ implies 
$k\equiv0\bmod p_n$, and centrality of $z_n=a_n$ with respect to $t_n$ yields $a_n=0$.
We have that 
\begin{equation}
    H\coloneqq\prod_{n \in \mathbb{N}} D_n
\end{equation}
is a centerfree  profinite  group, in particular $H/Z(H)=H$ is compact. Moreover, $H \in \mathfrak{E}_{< \infty}$, and actually $H \in \mathfrak{E}_0$. 

To check this, we first observe that for any pair of positive integers $m \neq n$ there are just trivial homomorphisms from $D_m$  to $ D_n$:
\begin{equation}\label{eq:nocrosshomos}
\mathrm{Hom}(D_m,D_n)=1\qquad\textup{whenever }m\neq n.
\end{equation}
Indeed, if $f\colon D_m\to D_n$ is a homomorphism, composing $f$ with the canonical projection $D_n\to D_n/A_n\simeq B_n$ gives a homomorphism from $D_m$ to the abelian group $B_n$, and hence it factors through the abelianization $D_m/[D_m,D_m]\simeq B_m$. Since $B_m$ and $B_n$ have distinct prime orders, this homomorphism is trivial. Therefore $f(D_m)\subseteq A_n$.  As $A_n$ is an abelian $2$-group, repeating a similar argument on $f$, we get that $f$ is equal to the trivial homomorphism.

Let now $\varphi\in\mathrm{End}(H)$. Denote by $\pi_n\colon H\to D_n$ the projection onto the $n$-th coordinate. The homomorphism $\pi_n\circ\varphi\colon H\longrightarrow D_n$ is continuous into the finite discrete group $D_n$, hence it depends on only finitely many coordinates. By  \eqref{eq:nocrosshomos}, for every $n\in\mathbb{N}$, we have $\pi_n\circ\varphi=\varphi_n\circ \pi_n$ for some $\varphi_n\in\mathrm{End}(D_n)$. It follows that every continuous endomorphism of $H$ is coordinatewise 
\begin{equation}
\varphi=\prod_{n\in\mathbb{N}}\varphi_n.
\end{equation}
For every finite subset $F\subset\mathbb{N}$, put
\begin{equation}
V_F \coloneqq \prod_{n\in F}\{1_n\} \times \prod_{n\notin F}D_n.
\end{equation}
By varying $F$, the groups $V_F$ form a local basis of neighborhoods of the identity of $H$ consisting of compact open subgroups. Moreover, every $V_F$ is fully invariant, since every continuous endomorphism of $H$ acts coordinatewise. For all $\varphi\in\mathrm{End}(H)$ and for all $F$, we have $\varphi(V_F)\subseteq V_F$, thus $C_k(\varphi,V_F)=V_F$ for all $k\geq1$, and $\mathsf{H}_{\mathsf{top}}(\varphi,V_F)=0$. By van Dantzig's Theorem (see Remark \ref{rem:vanDantzig}), we get $\mathsf{h}_{\mathsf{top}}(\varphi)=0$ for all $\varphi\in\mathrm{End}(H)$, namely,
\begin{equation}
H\in\mathfrak{E}_0\subseteq \mathfrak{E}_{<\infty}.
\end{equation}
It remains to examine the closed derived subgroup $[H,H]$. Since $[D_n,D_n]=A_n$ then $[H,H]\subseteq \prod_{n\in\mathbb{N}}A_n$; the converse inclusion holds true because every element of $A_n$ is a commutator $[a_n,t_n]$ for suitable $a_n\in A_n$, hence every $x=(x_n)_{n\in \mathbb{N}}\in \prod_{n\in\mathbb{N}}A_n$ is of the form $x=[(a_n)_{n\in \mathbb{N}},(t_n)_{n\in \mathbb{N}}]$. Therefore \begin{equation} \overline{[H,H]}=[H,H]=\prod_{n \in \mathbb{N}} A_n\simeq \prod_{n \in \mathbb{N}} \mathbb{Z}(2)^{d_n} \simeq \left(\mathbb{Z}(2)^{\mathbb{N}}\right)^{\mathbb{N}}.
\end{equation}
This allows us to define the Bernoulli shift on $\overline{[H,H]}$ that has infinite topological entropy (see Example \ref{ex:Bernoulli}), hence $\overline{[H,H]} \not\in \mathfrak{E}_{<\infty}$.
\end{proof}

\begin{remark}
In the claim of the proof of Theorem \ref{1stmain} we can see that actually: if $S$ is a compact connected semisimple group with $S/Z(S)\in \mathfrak{E}_{<\infty}$, then $S\in \mathfrak{E}_0$. In fact this shows the argument and analogies with \cite[Proof of Theorem 6.9]{DikranSanchis}, a modification of which shows that for a compact connected group $S$ we have that every quotient subgroup of $S$ belongs to $\mathfrak{E}_0$ if and only if $S$ is semisimple and every Lie component has finite multiplicity. 
In particular, this is to say that: if $G$ is a locally compact connected group with $G/Z(G)$ compact in $\mathfrak{E}_{<\infty}$, then $\overline{[G,G]}$ is compact and  belongs to $\mathfrak{E}_0$. 
\end{remark}

We briefly recall a construction due to van Est \cite{EstAlg} and recently studied also by Peng and others \cite{Peng}. Let $G$ be a locally compact group and let $\mathrm{Aut}(G)$ denote the group of topological automorphisms of $G$. If $H$ is a subgroup of $\mathrm{Aut}(G)$, one may endow $H$ with the \textit{Birkhoff topology}, which is the group topology induced by the local basis at the identity  \begin{equation}\label{bt}\mathcal S(C,U_0)\coloneqq\{\varphi\in H \mid \textup{ for all } g\in C,\  \varphi(g)\in gU_0,  \ \varphi^{-1}(g)\in gU_0\},\end{equation} where $C$ is a compact subset and $U_0$ a neighborhood of the identity of $G$. Following van Est \cite{EstAlg}, a locally compact group $G$ is called $\mathsf{CA}$-\textit{group} if the subgroup of inner automorphisms
$\mathrm{Inn}(G)$ is closed in $\mathrm{Aut}(G)$, where both groups are endowed with the Birkhoff topology. Typical examples of $\mathsf{CA}$-groups include compact groups, connected nilpotent Lie groups and connected semisimple Lie groups; see \cite{EstAlg,Peng} for details. The relation between  $G/Z(G)$ and $\mathrm{Inn}(G)$ in a locally compact group $G$ is given by the natural continuous bijective homomorphism
\begin{equation}\label{eq:isocontiG/ZInn}
xZ(G) \in G/Z(G)\mapsto \psi_x \in \mathrm{Inn}(G),\quad \textup{where}\quad \psi_x(g)=xgx^{-1}.
\end{equation}
A connected Lie group is a $\mathsf{CA}$-group if and only if \eqref{eq:isocontiG/ZInn} is a topological isomorphism, see \cite{Peng}. If $G$ is a $\mathsf{Z}$-group, then 
the continuous isomorphism \eqref{eq:isocontiG/ZInn} is automatically a homeomorphism, thus $G$ is a $\mathsf{CA}$-group. 
The connection \eqref{eq:isocontiG/ZInn} provides a natural framework for interpreting Theorem \ref{1stmain} in dynamical terms: $G/Z(G)\in\mathfrak{E}_{<\infty}$ may equivalently be viewed as $\mathrm{Inn}(G)\in\mathfrak{E}_{<\infty}$, and
conditions of finiteness on the topological entropy of the central quotient may be viewed as restrictions on the dynamical behavior of the inner automorphisms of a locally compact group. The class of $\mathsf{CA}$-groups therefore provides a broader setting in which to investigate entropy versions of Schur-type phenomena. Nevertheless, the $\mathsf{CA}$-property alone is not sufficient: every compact group is a $\mathsf{CA}$-group, whereas the counterexample constructed in the proof of Theorem \ref{1stmain} is profinite hence compact. Motivated by these observations and by the results of \cite{RussoWaka, RussoWakabis, olwethuadditional}, it is natural to ask whether Theorem \ref{1stmain} extends beyond the class of connected $\mathsf{Z}$-groups.

\begin{problem}\label{problem} Investigate conditions on a $\mathsf{CA}$-group $G$ such that \[G/Z(G)\in \mathfrak{E}_{<\infty} \ \ \ \mbox{implies} \ \ \  \overline{[G,G]} \in \mathfrak{E}_{<\infty}.\] 
\end{problem}

The preceding discussions suggest further generalizations. For instance, if $G$ is a locally compact group possessing a nontrivial fully invariant closed subgroup $N$, we can define the restriction map 
\begin{equation}\label{res}
    \mathrm{res}_N \colon \varphi \in  \mathrm{End}(G) \mapsto \mathrm{res}_N(\varphi)\coloneqq\left.\varphi\right\rvert_N \in \mathrm{End}(N).
\end{equation}
If $\mathrm{res}_N$ is surjective, any continuous endomorphism $\psi$ of $N$ arises from the restriction of a continuous endomorphism of $G$ to $N$ (i.e. $\psi$ extends to a continuous endomorphism of $G$). Under these lucky hypotheses, $G\in\mathfrak{E}_{<\infty}$ implies $N\in\mathfrak{E}_{<\infty}$ by Lemma \ref{teorspezzamento}(i); the same holds with $\mathfrak{E}_0$ in place of $\mathfrak{E}_{<\infty}$. Conversely, to infer $G\in\mathfrak{E}_{<\infty}$ from $N\in\mathfrak{E}_{<\infty}$ one also needs control of $G/N$ and an applicable Addition Theorem. Although the implication from $G\in\mathfrak{E}_{<\infty}$ to $ N\in\mathfrak{E}_{<\infty}$, guaranteed by the surjectivity of \eqref{res}, is not itself a dynamical Schur's Theorem (with $N=\overline{[G,G]}$), it provides essential ingredients: one would need suitable conditions for the finiteness of the topological entropy to be lifted from $G/Z(G)$ to $G$, and so Problem \ref{problem} would follow. The extension problem is particularly natural for classes of locally compact groups whose endomorphism monoids admit an explicit structural description. Certain Heisenberg groups may provide useful test cases in this direction.
The counterexample $H$ in Theorem \ref{1stmain}, however, shows that full invariance alone is not sufficient: $H\in\mathfrak{E}_0$ does not imply $\overline{[H,H]}\in \mathfrak{E}_{<\infty}$, and although $\overline{[H,H]}$ is fully invariant, not all of its continuous endomorphisms extend to $H$.

\begin{problem}\label{problextres} \

(i). If $N$ is a closed fully invariant subgroup of a locally compact group $G$, characterize the pairs $(G,N)$ for which \eqref{res} is surjective, in particular for $N=\overline{[G,G]}$.

(ii). Investigate  Schur's Theorem for locally compact groups possessing large fully invariant closed subgroups  in $\mathfrak{E}_{< \infty}$ such that \eqref{res} is surjective.
\end{problem}

%%%%%%%%%%%%%%%%%%%%%%%%%%%%%%%%%%%%%%%%%%%%%%%%%%%%%%%%%%%%%%%
\section{The second main theorem and the interesting behavior of Heisenberg groups}\label{sez:ourTHEOR1.2}

We begin by recalling the definition and some characterizations of Heisenberg groups; these are suitable families of upper triangular matrices, see \cite{hofmor, rob, RussoWaka}.

\begin{definition}[See \cite{BonDikHeisenberg,RussoWaka}, Large Heisenberg Groups]\label{classicalconstruction}
Let $R$ be a nontrivial topological commutative ring with identity, and let $n\in\mathbb{N}$. The \emph{$n$-th Heisenberg group over $R$} is denoted by $\mathbb{H}_n(R)$ and is the topological group of the $(n+2)\times(n+2)$ matrices
\begin{equation}\label{largeheisenberg1} 
M(\mathbf{a},\mathbf{b},c) \coloneqq \left(\begin{array}{c|cccc|c} 1 & a_1 & a_2 & ... & a_n & c\\
\hline 0 & 1 & 0 & ... & 0 & b_1  \\ 
0 & 0 & 1 & ... & 0 & b_2  \\ 
... & ... & ... & ...  & ... &...  \\
0 & 0 & 0 & ... & 1 & b_n  \\ 
\hline 0 & 0 & 0 & 0 & 0 & 1\\ \end{array}\right)= 
\begin{pmatrix}
    1 & \mathbf{a} & c \\
    \mathbf{0}^\top & I_n & \mathbf{b}^\top \\
    0 & \mathbf{0} & 1 
\end{pmatrix},
\end{equation}
where $\mathbf{a}=(a_1,\dots,a_n),$ $\mathbf{b}=(b_1,\dots,b_n),$ $\mathbf{0}=(0,\dots,0)\in R^n$, $\mathbf{b}^\top$ is the transpose of $\mathbf{b}$ and $c\in R$. The group \begin{equation}\mathbb{H}_n(R)\coloneqq\{M(\mathbf{a},\mathbf{b},c) \mid a_i, b_j, c \in R; \ i,j \in \{1, 2, \ldots, n\} \}\end{equation} is topologized with the induced topology, which is obtained by the product topology of finitely many copies of $R$.
\end{definition}

\begin{remark}\label{wellknownfacts} 
As topological spaces, we may identify
\begin{equation}
\mathbb{H}_n(R)\simeq R^n\times R^n\times R,
\end{equation}
since the elements $M(\mathbf{a},\mathbf{b},c)$ are in bijective correspondence with $(\mathbf{a},\mathbf{b},c)$. The product of two matrices in $\mathbb{H}_n(R)$ is given by
\begin{equation}\label{eq:prodottoHeisen}
M(\mathbf{a},\mathbf{b},c) \ M(\mathbf{u},\mathbf{v},w)=M(\mathbf{a}+\mathbf{u},\,\mathbf{b}+\mathbf{v},\, c + w+ \mathbf{a} \mathbf{v}^\top),
\end{equation}
where $\mathbf{a}\mathbf{v}^\top = a_1 v_1 + \ldots +  a_n v_n$ according to the ring operations and this sum originates from a bilinear form over $R$. In particular, one can note from the $(1,n+2)$ entry  $c+w+\mathbf{a} \mathbf{v}^\top$ that the Heisenberg group is nonabelian.  Indeed, the center of the Heisenberg group is
\begin{equation}\label{eq:centreHei}
Z\left(\mathbb{H}_n(R)\right)=\{M(\mathbf{0},\mathbf{0},c)\mid c\in R\}\simeq R^+,
\end{equation}
where $R^+$ denotes the additive topological group underlying $R$.
The inverse of a matrix in $\mathbb{H}_n(R)$ is given by
\begin{equation}\label{eq:inversoHeisen}
M(\mathbf{a},\mathbf{b},c)^{-1}=M(-\mathbf{a},-\mathbf{b},-c+\mathbf{a}\mathbf{b}^\top).
\end{equation}
The commutator between two matrices in $\mathbb{H}_n(R)$ is
\begin{equation}\label{eq:commHeis}
\left[M(\mathbf{a},\mathbf{b},c),M(\mathbf{u},\mathbf{v},w)\right]=M(\mathbf{0},\mathbf{0},\mathbf{a}\mathbf{v}^\top-\mathbf{u}\mathbf{b}^\top),
\end{equation}
therefore the derived subgroup is 
\begin{equation}\label{eq:comm=zentrum}
\left[\mathbb{H}_n(R),\mathbb{H}_n(R)\right] 
= Z\left(\mathbb{H}_n(R)\right),
\end{equation}
and this turns out to be a closed normal subgroup in $\mathbb{H}_n(R)$. Thus, the abelianization of $\mathbb{H}_n(R)$ is given by \begin{equation}\label{eq:abelianisedHnR}
\mathbb{H}_n(R)/\left[\mathbb{H}_n(R),\mathbb{H}_n(R)\right]\simeq (R^+)^{2n}.
\end{equation}
The Heisenberg group $\mathbb{H}_n(R)$ is nilpotent of nilpotency class $2$, because $\mathbb{H}_n(R)$ is not abelian, $R\neq0$, and $\mathbb{H}_n(R)/Z\left(\mathbb{H}_n(R)\right)$ is abelian. For every $m\in\mathbb{N}$, the $m$-th power of a matrix in a Heisenberg group is
\begin{equation}\label{eq:powergen}
    M(\mathbf{a},\mathbf{b},c)^m=M\left(m\mathbf{a}, m\mathbf{b},mc+ \binom{m}{2} \mathbf{a}\mathbf{b}^\top\right),
\end{equation}
as can be readily checked by induction.
If $R^+$ is topologically finitely  generated, and $Y\subseteq R$ is a finite set of topological generators of $R^+$, then a finite set of topological generators of $\mathbb{H}_n(R)$ is given by
\begin{equation}\label{eq:topgenHnR}
\left\{M(y\mathbf{e}_k,\mathbf{0},0),\ M(\mathbf{0},y\mathbf{e}_k,0) \mid \, y\in Y,\ 1\leq k\leq n  \right\},
\end{equation}
where $\mathbf{e}_k=(0,\ldots,0,1,0,\ldots,0)$ is the sequence of all zeros except for a $1$ in the $k$-th position.

Now, $E\coloneqq \{M(\mathbf{a},\mathbf{0},0) \mid \mathbf{a}\in R^n\}\simeq (R^+)^n$ and $F\coloneqq\{M(\mathbf{0},\mathbf{b},0) \mid \mathbf{b}\in R^n\}\simeq (R^+)^n$ are abelian subgroups of $\mathbb{H}_{n}(R)$ with trivial intersection, and we also find that $H_1= Z\left(\mathbb{H}_{n}(R)\right)\times E\simeq (R^+)^{n+1}$ and $H_2= Z\left(\mathbb{H}_{n}(R)\right)\times F\simeq (R^+)^{n+1}$ are maximal abelian  subgroups such that $ \mathbb{H}_{n}(R)\simeq H_1 \rtimes F\simeq H_2\rtimes E$ (see \cite{BonDikHeisenberg}).
Note also from  \eqref{eq:commHeis} that
\begin{equation}\label{eq:commHeisimp}
\overline{\left[\mathbb{H}_n(R),\mathbb{H}_n(R)\right]} = \overline{\left[E,F\right]} = Z (\mathbb{H}_n(R)).
\end{equation}
For a finite family of nontrivial commutative topological rings with identity $(R_j)_{j=1}^N$, the coordinatewise identification induces a topological group isomorphism
\begin{equation}
\mathbb{H}_n\Bigg(\prod_{j=1}^N R_j\Bigg)\simeq \prod_{j=1}^N \mathbb{H}_n(R_j).
\end{equation}
\end{remark}

Lemma \ref{p-rank-finite} is a classification theorem for topological groups; it does not provide a classification of compatible topological ring structures. Motivated by the torsionfree factors $\mathbb{Q}_p$ and $\mathbb{Z}_p$ occurring in
that lemma, we single out the following class of topological rings.
\begin{corollary}\label{p-rank-finiterings}
Let $p$ be a prime  and $\varepsilon,\zeta$ nonnegative integers such that $\varepsilon+\zeta>0$. Consider $\mathbb{Z}_p$ and $\mathbb{Q}_p$ as topological rings, with $p$-adic topology. Set
\begin{equation}
R_{p,\varepsilon,\zeta} \coloneqq \mathbb{Q}_p^\varepsilon \times\mathbb{Z}_p^\zeta    
\end{equation}
with componentwise addition and multiplication and with the product topology. Then $R_{p,\varepsilon,\zeta}$ is a nontrivial locally compact commutative ring with identity, whose additive group $R_{p,\varepsilon,\zeta}^+$ is a locally compact abelian 
$p$-group of  $\mathrm{rank}_p(R_{p,\varepsilon,\zeta}^+)= \varepsilon + \zeta < \infty$.
\end{corollary}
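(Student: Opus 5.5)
The statement to prove is Corollary \ref{p-rank-finiterings}, and the plan is to verify each of its assertions directly from the definitions. Then the $p$-rank is read off from Lemma \ref{p-rank-finite}.

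\emph{Ring structure and local compactness.} $\mathbb{Q}_p$ is a locally compact field and $\mathbb{Z}_p$ is a compact ring (Example \ref{lcapg}). A finite product of locally compact Hausdorff spaces is locally compact Hausdorff. Componentwise addition and multiplication are continuous in the product topology because each coordinate operation is continuous. Commutativity and associativity hold coordinatewise. The identity is $(1,\ldots,1)$, which differs from $0$ since $\varepsilon+\zeta>0$; hence $R_{p,\varepsilon,\zeta}$ is a nontrivial locally compact commutative ring with identity.

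\emph{The additive group is a $p$-group.} Take $x=(x_1,\ldots,x_{\varepsilon+\zeta})$ in $R^+_{p,\varepsilon,\zeta}$. Each coordinate satisfies $p^k x_i\to 0$ in the $p$-adic topology, because $|p^kx_i|_p=p^{-k}|x_i|_p$. Since convergence in a finite product is coordinatewise, $p^kx\to 0$. Thus every element is a $p$-element in the sense of Definition \ref{Abelian_p_groups}, so $R^+$ is a locally compact abelian $p$-group. Equivalently, $R^+$ is the union of the compact $p$-groups $p^{-k}\mathbb{Z}_p^\varepsilon\times\mathbb{Z}_p^\zeta$, as in Remark \ref{Abelian_p_groups_bis}.

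\emph{The $p$-rank.} As a topological group, $R^+\simeq \mathbb{Q}_p^\varepsilon\times\mathbb{Z}_p^\zeta\times\mathbb{Z}(p^\infty)^0\times E_p$ with $E_p$ trivial. This is exactly the shape in \eqref{eq:locompabpgrfinrank} with $\alpha=\varepsilon$, $\beta=\zeta$ and $\gamma=\delta=0$. Lemma \ref{p-rank-finite} then gives $\mathrm{rank}_p(R^+)=\varepsilon+\zeta<\infty$.

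\emph{Main obstacle.} The only delicate point is applying the formula of Lemma \ref{p-rank-finite} as stated. Strictly, that lemma is phrased as an equivalence plus a formula, and the formula is what we need for the displayed decomposition. If one prefers a direct check, there are two inequalities.

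For the lower bound, $\mathbb{Z}_p^{\varepsilon+\zeta}$ is a closed subgroup with $d=\varepsilon+\zeta$. This holds because its Frattini quotient is $\mathbb{Z}(p)^{\varepsilon+\zeta}$.

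For the upper bound, let $H$ be a topologically finitely generated closed subgroup. Its generators lie in some compact open subgroup $U=p^{-k}\mathbb{Z}_p^\varepsilon\times\mathbb{Z}_p^\zeta\simeq\mathbb{Z}_p^{\varepsilon+\zeta}$, so $H\le U$. Closed subgroups of $\mathbb{Z}_p^{m}$ are $\mathbb{Z}_p$-submodules, hence free of rank at most $m$, which gives $d(H)\le\varepsilon+\zeta$.
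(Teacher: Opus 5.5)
Your proposal is correct and follows the paper's route. The paper gives no separate proof; it treats the statement as an immediate consequence of Lemma~\ref{p-rank-finite} applied to $R^+\simeq\mathbb{Q}_p^\varepsilon\times\mathbb{Z}_p^\zeta$ with $\alpha=\varepsilon$, $\beta=\zeta$, $\gamma=\delta=0$. Your optional direct check of the two inequalities is also sound, and it is essentially the argument the paper later uses in Claim~1 of the proof of Theorem~\ref{2ndmain}, which covers $R^m$ by the compact open subgroups $B_s^m\simeq\mathbb{Z}_p^{mr}$.
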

Like every locally compact abelian $p$-group, $R_{p,\varepsilon,\zeta}^+$ is periodic \cite[p. 49]{HHR}. Theorem \ref{2ndmain} deals with  Heisenberg groups over rings as per Corollary \ref{p-rank-finiterings}. We have all that we need in order to show our second main result. For fixed $p,\varepsilon,\zeta$, we simplify the notation by writing 
\begin{equation}
R\coloneqq R_{p,\varepsilon,\zeta} \qquad\textup{and}\qquad r\coloneqq\mathrm{rank}_p(R_{p,\varepsilon,\zeta}^+),
\end{equation}
and by omitting the understood dependence on $p,\varepsilon,\zeta$ of all subsequently defined objects.

\begin{proof}[Proof of Theorem \ref{2ndmain}]
By the structure of $\mathbb{H}_n(R)$ which we have illustrated in Definition \ref{classicalconstruction} and Remark \ref{wellknownfacts}, along with Corollary \ref{p-rank-finiterings}, $\mathbb{H}_n(R)$ is a nilpotent group of nilpotency class $2$, and it is locally compact and totally disconnected. The $p^k$-th power of any element $M(\mathbf{a},\mathbf{b},\mathbf{c})\in \mathbb{H}_n(R)$ is given by \eqref{eq:powergen} as
\begin{equation}
 M(\mathbf{a},\mathbf{b},\mathbf{c})^{p^k}
 =M\left(p^k \mathbf{a}, p^k \mathbf{b},  p^kc+\binom{p^k}{2}\mathbf{a}\mathbf{b}^{\mathsf T} \right).
\end{equation}
The first three scalar-multiple terms converge to zero according to the $p$-adic topology. If $p$ is odd $\binom{p^k}{2}$ is divisible by $p^k$, while if $p=2$ it is divisible by $2^{k-1}$; in any case the final term also converges to zero. Thus every element is a $p$-element, and $\mathbb{H}_n(R)$ is a $p$-group. Every element of the $p$-group $\mathbb{H}_n(R)$ is compact (see \cite[p. 48]{HHR} adapted for nonabelian $p$-groups), hence $\mathbb{H}_n(R)$ is a periodic group. By Remark \ref{wellknownfacts} and Corollary \ref{p-rank-finiterings} to Lemma \ref{p-rank-finite}, we get that $[\mathbb{H}_n(R), \mathbb{H}_n(R)] \simeq R^+$ and $\mathbb{H}_n(R)/Z(\mathbb{H}_n(R))\simeq (R^+)^{2n}$ are locally compact abelian $p$-groups of finite $p$-rank
\begin{equation}
\mathrm{rank}_p\big([\mathbb{H}_n(R),\mathbb{H}_n(R)]\big)=r \quad \textup{and}\quad \mathrm{rank}_p\big(\mathbb{H}_n(R) / Z(\mathbb{H}_n(R))\big) = 2nr.\end{equation}
In particular, $[\mathbb{H}_n(R), \mathbb{H}_n(R)]$ and $\mathbb{H}_n(R) / Z(\mathbb{H}_n(R))$ belong to $\mathfrak{E}_{<\infty}$ by Lemma \ref{prankfin-entropy}. A large part of Theorem \ref{2ndmain} has already been proved: it remains just
\begin{equation}\label{eq:rankHnRbis}
    \mathrm{rank}_p (\mathbb{H}_n(R)) = (2n+1)r.
\end{equation}

\medskip

\textbf{Claim 1.} Let $m \in\mathbb{N}$. Every compact subgroup $C$ of $R^m$ has minimal number of topological generators
\begin{equation}\label{formercorollary}
 d(C)\leq mr.
\end{equation}
In fact, from \cite[Exercise E1.16]{hofmor} applied componentwise to $\mathbb{Q}_p^\varepsilon$, the reader can observe that $R^m = \bigcup_{s\geq0} B_s^m$ where 
\begin{equation}
B_s\coloneqq \frac{1}{p^s}\mathbb{Z}_p^{\varepsilon} \, \times \, \mathbb{Z}_p^{\zeta},  
\end{equation}
and the $B_s^m \simeq \mathbb{Z}_p^{mr}$ form an ascending sequence of compact open subgroups of $p$-rank $mr$. Since $C$ is compact, there exists $s\geq0$ such that
\begin{equation}
C\subseteq B_s^m.
\end{equation}
As a closed subgroup of $\mathbb{Z}_p^{mr}$, we have $C\simeq \mathbb{Z}_p^t$ for some $0\leq t\leq mr$. Therefore $d(C)=t$.

\medskip

\textbf{Claim 2.}  Every topologically finitely generated closed subgroup $L$ of $\mathbb{H}_n(R)$ satisfies
\begin{equation}\label{eq:dupperbound}
 d(L)\leq(2n+1)r.
\end{equation} Let $x_1,\ldots, x_t$ be a set of topological generators of $L$. Since there are finitely many of them, there exists $s\geq0$ such that all their coordinates belong to $B_s$. We have that $K_s\coloneqq\{M(\mathbf{a},\mathbf{b},c)\mid \,\mathbf{a},\mathbf{b}\in B_s^n,\,c\in B_{2s}\}$ is a subgroup of $\mathbb{H}_n(R)$, by the product and inverse rules \eqref{eq:prodottoHeisen}, \eqref{eq:inversoHeisen} with $B_s,B_sB_s\subseteq B_{2s}$. Moreover $K_s\simeq B_s^n\times B_s^n\times B_{2s}$ as topological spaces, hence $K_s$ is compact. Therefore, $L=\overline{\langle x_1,\ldots, x_t\rangle}\subseteq K_s$ is compact ($L$ is alternatively proved to be compact through \cite[Remark 1.14]{HHR} applied to the periodic solvable group $\mathbb{H}_n(R)$). Now, consider the projection $\pi\colon M(\mathbf{a},\mathbf{b},c) \in \mathbb{H}_n(R)\longmapsto \pi(M(\mathbf{a},\mathbf{b},c))\coloneqq (\mathbf{a},\mathbf{b}) \in R^n \times R^n$. Then $\pi(L)$ and $L\cap Z(\mathbb{H}_n(R))$ are compact subgroups respectively in $R^{2n}$ and $R$, and Claim 1 provides
\begin{equation}d(\pi(L)) \leq 2nr \qquad \mbox{and}  \qquad d(L\cap Z(\mathbb{H}_n(R)))\leq r.
\end{equation}
Choose topological generators of $\pi(L)$ and lift them to $L$; add generators of $L\cap Z(\mathbb{H}_n(R))$. The closed subgroup $\widetilde{L}$ generated by these elements is compact (since $L$ is compact and $\pi(\widetilde{L})$ is closed), hence $\pi(\widetilde{L})=\pi(L)$. Moreover, $\widetilde{L}$ contains $L\cap\ker(\pi)=L\cap Z(\mathbb{H}_n(R))$. Then, for every $x\in L$ there is $y\in \widetilde{L}$ such that $\pi(x)=\pi(y)$, yielding $x\in \widetilde{L}$ and $\widetilde{L}=L$. Therefore we conclude that
\begin{equation}\label{eq:daclaim2}
 d(L)\leq d(\pi(L))+d(L\cap Z(\mathbb{H}_n(R)))\leq 
 (2n+1)r.
\end{equation}

\medskip

From Claim 2, taking the supremum of both sides of \eqref{eq:dupperbound} over all topologically finitely generated closed subgroups $L$ of $\mathbb{H}_n(R)$,
 we get the following upper bound:
\begin{equation}\label{extraequation}\mathrm{rank}_p (\mathbb{H}_n(R))  \le (2n+1) r.\end{equation}

For the reverse inequality of \eqref{extraequation}, namely for the lower bound
\begin{equation}\label{extraequationbis}\mathrm{rank}_p (\mathbb{H}_n(R))  \ge (2n+1) r,\end{equation}  it is enough to exhibit a topologically finitely generated closed subgroup of $\mathbb{H}_n(R)$ achieving defect $(2n+1)r$. We set 
\begin{equation}
A\coloneqq \mathbb{Z}_p^{\varepsilon} \times \mathbb{Z}_p^{\zeta}\subseteq \mathbb{Q}^\varepsilon_p \times \mathbb{Z}^\zeta_p= R,\end{equation}
which is a compact open subring of $R$ and a free $\mathbb{Z}_p$-module whose rank is equal to $\mathrm{rank}_p(A^+)=r$ (here a finite set generates $A$ as a $\mathbb{Z}_p$-module if and only if it topologically generates its underlying additive group $A^+$, for $\mathbb{Z}$ is dense in $\mathbb{Z}_p$). We look at
\begin{equation}
U\coloneqq \{M(\mathbf{a},\mathbf{b},c) \mid \mathbf{a},\mathbf{b}\in(pA)^n,\ c\in pA\} \ \mbox{and} \ 
 U_2=
 \{M(\mathbf{a},\mathbf{b},c) \mid \mathbf{a},\mathbf{b}\in(p^2A)^n,\ c\in p^2A\}
\end{equation}
which are compact open subgroups of $\mathbb{H}_n(R)$. Our candidate to achieve the lower bound \eqref{extraequationbis} is such a $U$. The map $\rho\colon M(\mathbf{a},\mathbf{b},c)\in U\longmapsto (\mathbf{a}+(p^2A)^n, \mathbf{b}+(p^2A)^n, c+p^2A)\in (pA/p^2A)^{2n+1}$ is a homomorphism, because $(pA)(pA)\subseteq p^2A$ and the bilinear term in a product \eqref{eq:prodottoHeisen} cancels modulo $p^2A$ (the multiplication modulo $U_2$ is additive). Furthermore, $\rho$ is surjective with kernel $U_2$, hence
\begin{equation}\label{eq:U-quotient}
 U/U_2 \simeq (pA/p^2A)^{2n+1} \simeq \mathbb{Z}(p)^{(2n+1)r}.
\end{equation}
Every set of topological generators of $U$ projects onto a generating set of $U/U_2$, therefore  $d(U)\geq (2n+1)r$. Conversely, let $f_1,\ldots,f_{r}$ be the standard $\mathbb{Z}_p$-basis of $A$ and let $\mathbf{e}_1,\ldots,\mathbf{e}_n$ denote the standard coordinate vectors of $R^n$. The elements
\begin{equation}
 M(pf_j \mathbf{e}_k,0,0),
 \qquad
 M(0,pf_j \mathbf{e}_k,0),
 \qquad
 M(0,0,pf_j),
\end{equation}
for $1\leq k\leq n$ and $1\leq j\leq r$, topologically generate $U$: the first family generates $\{M(\mathbf{a},\mathbf{0},0)\mid \mathbf{a}\in(pA)^n\}$, the second family generates $\{M(\mathbf{0},\mathbf{b},0)\mid \mathbf{b}\in(pA)^n\}$, the third one $\{M(\mathbf{0},\mathbf{0},c)\mid c\in pA\}$, and $M(\mathbf{a},\mathbf{b},c)=M(\mathbf{0},\mathbf{b},0)M(\mathbf{a},\mathbf{0},0)M(\mathbf{0},\mathbf{0},c)$. We found the desired property for $U$,
\begin{equation}
d(U)=nr+nr+r = (2n+1)r.
\end{equation}
From the definition of $p$-rank and the fact that $U$ is a topologically finitely generated closed subgroup of $\mathbb{H}_n(R)$, we get $\mathrm{rank}_p(\mathbb{H}_n(R))\geq d(U)$, and also the lower bound \eqref{extraequationbis} is proved. Combining \eqref{extraequation} and \eqref{extraequationbis}, we obtain \eqref{eq:rankHnRbis} which completes the proof.
\end{proof}

The construction of $U$ of the proof of Theorem \ref{2ndmain} applies specifically to \cite[Theorem 1.5]{RussoWaka}, where the conclusions on periodicity, finite topological entropy and finite $p$-rank of $\mathbb{H}_n(\mathbb{Q}_p)$ are correct, however the precise value of its $p$-rank (coming from \cite[Lemma 4.4]{RussoWaka}) should be corrected to $\mathrm{rank}_p(\mathbb{H}_n(\mathbb{Q}_p)) = 2n+1$, falling into the case of \eqref{eq:rankHnRbis} and Theorem \ref{2ndmain} with $\zeta=0$ and $\varepsilon=1$. We now explain how this reflects at the level of Frattini subgroup used in \cite{RussoWaka}.
\begin{definition}[Frattini Subgroup, see \cite{rz}]
For a profinite group $G$, the \emph{Frattini subgroup} $\mathrm{Frat}(G)$ of $G$ is defined as the intersection of all the maximal open subgroups of $G$.
\end{definition}
Moreover, for a profinite group $G$, $\mathrm{Frat}(G)$ coincides with the set of all nongenerators of $G$; see \cite[Lemma 2.8.1]{rz}. If $G$ is a pro-$p$ group (i.e., a profinite $p$-group), then
\begin{equation}\label{eq:frattiniComm}\mathrm{Frat}(G)=\overline{G^p \ [G,G]}\end{equation}
by \cite[Lemma~2.8.7]{rz}, where $G^{p}\coloneqq\{g^p\mid g\in G\}$. If $G$ is a topologically finitely generated profinite group, then
\begin{equation}\label{eq:frattinid}
d(G) = d\bigl(G/\mathrm{Frat}(G)\bigr)
\end{equation}
by \cite[Lemma~2.8.6]{rz} ($\mathrm{Frat}(G)$ is a closed normal subgroup of $G$); if here in addition $G$ is a pro-$p$ group, then $G/\mathrm{Frat}(G)$ can be regarded as a vector space over the field with $p$ elements and $d\bigl(G/\mathrm{Frat}(G)\bigr)$ coincides with the dimension of $G/\mathrm{Frat}(G)$ in this sense.

For a prime $p$ and $n,\zeta\in\mathbb{N}$, the Heisenberg group $\mathbb{H}_n(\mathbb{Z}_p^\zeta)$ is a topologically finitely generated pro-$p$ group, for which the standard theory in \cite{rz} and formulas \eqref{eq:frattiniComm}, \eqref{eq:frattinid} hold true. In particular, 
\begin{equation}\label{eq:minimgensHnZpzeta}
d\big(\mathbb{H}_n(\mathbb{Z}_p^\zeta)\big) =d\big(\mathbb{H}_n(\mathbb{Z}_p^\zeta)/\mathrm{Frat}\left(\mathbb{H}_n(\mathbb{Z}_p^\zeta)\right)\big) =2n\zeta.
\end{equation}
However \eqref{eq:frattinid} does not yield an analogous identity for the $p$-rank in place of the defect, because the $p$-rank tests all closed topologically finitely generated subgroups (see this in \cite[Remark 4.2 and Eq. (4.11)]{RussoWaka} for $n=\zeta=1$).

Extending the notion of Frattini subgroup to arbitrary noncompact totally disconnected locally compact groups is more delicate: maximal proper open subgroups need not exist (as illustrated by the
additive group $\mathbb{Q}_p$) and their intersection cannot in general be identified automatically with the set of topological nongenerators. Broader approaches to the Frattini subgroup for noncompact topological groups have been developed in different settings; see, for instance, \cite{frattgen2024,frattgen1980}. At the same time, the classical profinite theory remains an effective local tool, to be applied to suitable compact pro-$p$ or pro-nilpotent subgroups in the study of noncompact totally disconnected locally compact groups \cite{caprace1,caprace2}. However, the standard theory in \cite{rz} and formulas \eqref{eq:frattiniComm}, \eqref{eq:frattinid} do not hold for arbitrary noncompact totally disconnected locally compact groups.

\begin{example}
For a prime $p$ and $n,\varepsilon\in\mathbb{N}$, consider the Heisenberg group $\mathbb{H}_n(\mathbb{Q}_p^\varepsilon)$. Since $\mathbb{H}_n(\mathbb{Q}_p^\varepsilon)$ is not a profinite group, the notion of its Frattini subgroup is subtle. In particular, $\mathbb{H}_n(\mathbb{Q}_p^\varepsilon)$ is not a pro-$p$ group, so \eqref{eq:frattiniComm} cannot be applied here. The $p$-th power map on $\mathbb{H}_n(\mathbb{Q}_p^\varepsilon)$ is surjective, because for every $M(\mathbf{a},\mathbf{b},c)\in \mathbb{H}_n(\mathbb{Q}_p^\varepsilon)$ we have $M\left(\frac{\mathbf{a}}{p}, \frac{\mathbf{b}}{p}, \frac{1}{p}\left(c-\binom{p}{2}\frac{\mathbf{a}}{p}\frac{\mathbf{b}^\top}{p}\right)\right)^p=M(\mathbf{a},\mathbf{b},c)$, hence $\mathbb{H}_n(\mathbb{Q}_p^\varepsilon)^p= \mathbb{H}_n(\mathbb{Q}_p^\varepsilon)$ and
\begin{equation}
\overline{\mathbb{H}_n(\mathbb{Q}_p^\varepsilon)^p \, \bigl[  \mathbb{H}_n(\mathbb{Q}_p^\varepsilon),\mathbb{H}_n(\mathbb{Q}_p^\varepsilon)\bigr]}=\mathbb{H}_n(\mathbb{Q}_p^\varepsilon).
\end{equation}
Thus a formal use of \eqref{eq:frattiniComm} would yield a trivial Frattini quotient and cannot be used to compute either the minimum number of topological generators or the $p$-rank of $\mathbb{H}_n(\mathbb{Q}_p^\varepsilon)$. Also \eqref{eq:frattinid} does not apply to $\mathbb{H}_n(\mathbb{Q}_p^\varepsilon)$ since this group is neither compact nor topologically finitely generated (the latter by Claim 2 in the proof of Theorem \ref{2ndmain}). This discussion leads us to review the expression of the $p$-rank in \cite[Lemma 4.4 and  Theorem 1.5]{RussoWaka} for $\varepsilon=1$. In fact, the correct estimate is given by \eqref{eq:daclaim2}, and the subgroup $U$ constructed in the proof of Theorem \ref{2ndmain} shows that this contribution is genuine, yielding $d(U)=(2n+1)\varepsilon$ and 
\begin{equation}
\mathrm{rank}_p\big(\mathbb{H}_n(\mathbb{Q}_p^\varepsilon)\big) = (2n+1)\varepsilon.
\end{equation}
\end{example}

%%%%%%%%%%%%%%%%%%%%%%%%%%%%%%%%%%%%%%%%%%%%%%%%%%%%%%%

\end{document}